\newcommand{\D}{\mathbb{D}}
\font\sets=msbm10 scaled \magstep1
\def\R{\text{\sets R}}
\def\C{\text{\sets C}}
\newcommand{\dist}{\mathrm{dist}}
\renewcommand\Re{\operatorname{Re}}
\renewcommand\Im{\operatorname{Im}}
\renewcommand\arg{\operatorname{arg}}
\renewcommand{\bar}[1]{\overline{#1}}
\newcommand{\diam}{{\rm diam \,}}
\theoremstyle{plain}
\newtheorem{theorem}{Theorem}
\newtheorem{lemma}[theorem]{Lemma}
\newtheorem{corollary}[theorem]{Corollary}
\theoremstyle{definition}
\newtheorem{remark}[theorem]{Remark}
\newcommand{\Mod}{\operatorname{Mod}}
\author[O. Hirviniemi]{Olli Hirviniemi}
\address{University of Helsinki, Department of Mathematics and Statistics, P.O. Box 68, FIN-00014 University of Helsinki, Finland}
\email{olli.hirviniemi@helsinki.fi}
\author[L. Hitruhin]{Lauri Hitruhin}
\address{Department of Mathematics and Systems Analysis Aalto University,
P.O. Box 11100 FI-00076 Aalto
Helsinki, Finland}
\email{lauri.hitruhin@aalto.fi}
\author[I. Prause]{Istv\'an Prause}
\address{Department of Physics and Mathematics, University of Eastern Finland, P.O. Box 111, 80101 Joensuu, Finland}
\email{istvan.prause@uef.fi}
\author[E. Saksman]{Eero Saksman}
\address{University of Helsinki, Department of Mathematics and Statistics, P.O. Box 68, FIN-00014 University of Helsinki, Finland}
\email{eero.saksman@helsinki.fi}
\title{On mappings of finite distortion that are quasiconformal in the unit disk}
\begin{document}

\begin{abstract}
We study quasiconformal mappings of the unit disk that have homeomorphic planar extension with controlled distortion. For these mappings we prove a  bound for the modulus of continuity of the inverse map, which somewhat surprisingly is almost as good as for global quasiconformal maps. Furthermore, we give examples which improve the known bounds for the  three point property of generalized quasidisks. 
  Finally, we establish optimal regularity of such maps when the image of the unit disk has cusp type singularities.  
\end{abstract}

\maketitle
\section{Introduction}
In recent years there has been a considerable interest in studying mappings that are conformal or quasiconformal on the unit disk, but admit a controlled extension to a self-map of the plane. This includes regularity properties of conformal maps which admit quasiconformal extension, see e.g. \cite{AN,PS,Pr19}, as well as study of modulus of continuity of such maps when the extension is a homeomorphic map of finite distortion \cite{Zap}, and for sufficient or necessary three point conditions on the boundary which guarantee the existence of a controlled extension \cite{GUO1, GKT, GUOXU, KT}. One may also note that the kind of maps in question appear naturally when one uses quasiconformal approach to rough conformal welding  problems. In the present paper we continue this line of research improving and complementing some previous results  of the references above.

\subsection{Three point condition.} 
From the work of Gehring in \cite{Geh} it is known that so-called Ahlfors three point condition completely describes when a conformal mapping from the unit disk can be extended to planar quasiconformal mapping. It is clear that if we relax the condition on the distortion of the extension, then also the three point condition can be relaxed, which leads to a three point condition with a \emph{non-linear} control function, see Section \ref{se:prereq}. This property has been recently studied by, among others, Guo and Xu \cite{GUO1, GUOXU}.  In cases when the homeomorphic extension is assumed to have exponentially integrable or  integrable distortion,  they find a sufficient three point condition at the boundary to ensure extendability.  On the other hand, the previous best counterexamples for failure of extendability in these cases are attained by classical inner cusp constructions, see \cite{GKT, KT}, and leave a gap between the positive and negative results.   We improve these examples by using different target domains than inward cusps, and hence make the gaps substantially smaller. First we consider the case with exponentially integrable distortion.
\begin{theorem} \label{3point}
	Fix any $K \geq 1$ and $\gamma >0$. There exists a Jordan domain $\Omega$ whose boundary satisfies three point condition with a control function
	\begin{equation}\label{controllexpint}
	h(t)=Ct \log^{\frac{1}{2}+\gamma}\left(\frac{1}{t}\right),
	\end{equation}
	such that there does not exist a $K$-quasiconformal mapping  $f: \D \to \Omega$ possessing a planar homeomorphic extension with exponentially integrable distortion.
\end{theorem}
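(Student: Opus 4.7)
The plan is to exhibit a Jordan domain $\Omega$ whose boundary has a carefully shaped mild singularity — deliberately different from the classical inner cusp — so that the three-point condition holds with the logarithmic control function $h(t)=Ct\log^{\frac{1}{2}+\gamma}(1/t)$, yet no $K$-quasiconformal map $f:\D\to\Omega$ admits a planar homeomorphic extension of exponentially integrable distortion.

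The domain $\Omega$ I would construct has a ``logarithmic cusp'' feature localised near a distinguished boundary point $p$: $\partial\Omega$ is smooth outside a small neighbourhood of $p$, and near $p$ one of the two complementary domains locally looks like $\{(x,y):0<x<x_{0},\ |y|<\phi(x)\}$ with profile $\phi(x)=x/\log^{\beta}(1/x)$ and $\beta=\frac{1}{2}+\gamma$. The direction of the tongue (into $\Omega$ or into $\cc\setminus\bar{\Omega}$) is the crucial design choice: in line with the paper's premise that the classical inner-cusp constructions are being improved upon, the tongue should be chosen so that the non-extendability argument below actually functions, forcing the exterior map to carry the distortion. Checking the three-point condition is then a direct boundary-arc computation: the extremal triples $(a,b,c)$ are three points near $p$ with $b$ at the tip, the arc through $b$ having Euclidean diameter of order $x_{0}$ while $|a-c|\asymp 2\phi(x_{0})$, so the logarithmic profile of $\phi$ produces exactly the target control $h$ up to constants.

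For the non-extendability, I would argue by contradiction. Suppose $f:\D\to\Omega$ is $K$-quasiconformal and admits a planar homeomorphic extension $F$ with exponentially integrable distortion. Then $F^{-1}$ is also a planar homeomorphism of exponentially integrable distortion, and the modulus-of-continuity bound for such maps (Zapadinskaya-type, see \cite{Zap}) gives an estimate of the form $|F^{-1}(z)-F^{-1}(w)|\le C/\log^{1/2}(e/|z-w|)$. Applied to two points near the cusp tip at Euclidean distance $\phi(x_{0})$, this squeezes their preimages on $\partial\D$ together at rate $1/\log^{1/2}(1/x_{0})$. A competing bound comes from the topology of the cusp combined with the $K$-quasiconformality of $f$: the preimages of a crosscut of the tongue must bound a subregion of $\cc\setminus\bar{\D}$ whose conformal modulus matches that of the tongue (up to a factor of $K$), and this forces the corresponding preimages on $\partial\D$ to stay separated by a definite amount tied to the divergence rate of the tongue modulus. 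The exponent $\frac{1}{2}$ in the modulus-of-continuity bound exactly matches the threshold $\beta=\frac{1}{2}$ in the profile of $\phi$, so the slack $\gamma>0$ produces a quantitative inconsistency as $x_{0}\to 0$ and yields the contradiction.

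The main obstacle will be a clean handling of the $K$-dependence and of the precise quantitative form of the modulus-of-continuity estimate for $F^{-1}$. The exponent in $h$ comes out to $\frac{1}{2}+\gamma$ only if the exterior estimate is sharp and \emph{independent of }$K$; otherwise the $K$-quasiconformal Hölder loss on the interior side would swamp the logarithmic gain and degrade the bound. One natural way to bypass this is to phrase both sides in terms of conformal moduli (distorted by at most $K$ on the $\D$-side and by additive logarithmic terms on the exterior side) so that $K$ enters only multiplicatively and the logarithmic sharpness is preserved. A possible alternative is to iterate the construction, placing a summable family of such cusps along $\partial\Omega$ at geometrically decreasing scales so that the obstruction accumulates; the technical step is then to verify that the concatenated boundary is still Jordan and still obeys the three-point condition with control $h$.
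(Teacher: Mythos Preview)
Your proposed domain --- a single logarithmic tongue with profile $\phi(x)=x/\log^{1/2+\gamma}(1/x)$ --- is exactly the inward-cusp construction already used in \cite{GKT}, and as the introduction explicitly records, that construction only yields the control function $h(t)=Ct\log^{1+\gamma}(1/t)$, not $h(t)=Ct\log^{1/2+\gamma}(1/t)$. The reason is a modulus count. If $\Gamma$ is the family of curves in $\C\setminus\overline{\D}$ joining the preimages of the two sides of the tongue, then the weighted modulus on the preimage side satisfies $M_{K_f}(\Gamma)\lesssim \log^{2}(1/\bar r)$ (this uses the exponential integrability of $K_f$ together with the Beurling-type Lemma~\ref{apulemma}), while for a \emph{single} tongue of width $\phi$ the image-side modulus $M(f(\Gamma))$ is only of order $\int_{\bar r}^{1}\phi(x)^{-1}\,dx\approx \log^{\beta+1}(1/\bar r)$. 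The inequality $M(f(\Gamma))\le M_{K_f}(\Gamma)$ thus gives a contradiction only when $\beta>1$. The paper's actual construction replaces the single tongue by a \emph{snake-like tunnel} that folds back and forth inside a sector, so that the tubes essentially tile the sector; this boosts the image-side modulus to $\int_{\{|z|\ge\bar r\}\cap A_\alpha}|z|^{-2}\log^{2\beta}(1/|z|)\,dA(z)\approx\log^{2\beta+1}(1/\bar r)$, and the contradiction now fires already for $\beta>1/2$. Your fallback idea of placing a summable family of separate cusps at different boundary points does not help either, since the obstruction at each individual cusp is still only the $\beta>1$ one.

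Your route to the obstruction via a modulus-of-continuity bound for $F^{-1}$ also does not work as stated. Zapadinskaya's estimate \eqref{UppermodulusZapadinskay} controls $|f(z)-f(w)|$ for a map that is quasiconformal in $\D$, not $|F^{-1}(z)-F^{-1}(w)|$, and its exponent is the integrability parameter $p$, not a universal $1/2$; there is no off-the-shelf bound of the form $|F^{-1}(z)-F^{-1}(w)|\le C/\log^{1/2}(e/|z-w|)$, and in general $F^{-1}$ need not even have exponentially integrable distortion. The paper bypasses any such continuity input and argues directly with the modulus inequality \eqref{Modeq}, which is where the exponential integrability enters quantitatively.
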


In \cite{GKT} the authors used inward cusps as target domains to prove Theorem \ref{3point} with the control function $h(t)=Ct \log^{1+\gamma}\left(\frac{1}{t}\right)$, and thus we improve the exponent in \eqref{controllexpint} by the factor of $\frac{1}{2}$. For the positive direction Guo and  Xu  in 
\cite[Theorem 3.3]{GUOXU}  show that if the three point condition holds for $\Omega$ with the gauge  $h(t)=Ct [\log \log \left( e + \frac{1}{t} \right)]^{1/2-\varepsilon} $, then any  conformal map $f:\D\to \Omega$ has  planar extension with exponentially integrable distortion.\footnote{We mention  in passing that if the proof of  \cite[Theorem 1.1]{GUO1} could be corrected (see \cite[Remark 3.4]{GUOXU}) this would show that the exponent in Theorem \ref{3point} is sharp.}

We prove a similar result when we only assume that the distortion of extension is locally $p$-integrable.
\begin{theorem}\label{Teoreema2}
	There exists a Jordan domain $\Omega$ whose boundary satisfies three point condition with the control function $h(t)=Ct^s$, where $s\in (0,1)$, and for which there does not exist a conformal mapping $f$ with $f(\D)= \Omega$ that has a planar homeomorphic extension with $p$-integrable  distortion if $p \geq 1$ and
	\begin{equation}\label{controllpint}
	p >\frac{s}{2(1-s)}.
	\end{equation} 
\end{theorem}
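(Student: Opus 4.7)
Take $\alpha=1/s\in(1,\infty)$ and build $\Omega$ as a bounded Jordan domain with a single outward cusp at the origin: locally $\Omega\cap B(0,1)=B(0,1)\setminus K$ with $K=\{(x,y):0<x\le 1,\ |y|<x^{\alpha}\}$, and $\partial\Omega$ smoothly closed away from the cusp. The three-point condition with gauge $h(t)=Ct^{s}$ will be immediate from the critical triple $z_{1}=(x,x^{\alpha})$, $z_{2}=(0,0)$, $z_{3}=(x,-x^{\alpha})$ on $\partial\Omega$, with $z_{2}$ on the shorter subarc: since $|z_{1}-z_{3}|=2x^{\alpha}$ and $|z_{1}-z_{2}|\asymp x=(|z_{1}-z_{3}|/2)^{1/\alpha}=(|z_{1}-z_{3}|/2)^{s}$, the choice $h(t)=2^{s}t^{s}$ works.

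Next I would suppose, toward a contradiction, that $f:\D\to\Omega$ is conformal and extends to a planar homeomorphism $\hat f$ with $K_{\hat f}\in L^{p}_{\mathrm{loc}}$ and $p>s/(2(1-s))$. After composing with a M\"obius of $\D$, I may assume $\hat f(1)=0$. Since $\partial\Omega$ has interior angle $2\pi$ at $0$, a cusp-type Schwarz--Christoffel expansion gives the power-law asymptotic $|f(\zeta)|\asymp|1-\zeta|^{2}$ as $\zeta\to 1$ in $\overline\D$; in particular the boundary preimages of the cusp arcs $y=\pm x^{\alpha}$ lie on $\partial\D$ at angular distance $\asymp x^{1/2}$ from $1$.

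The core of the proof will be a weighted modulus computation on the dyadic cuspidal slices $C_{k}=\{r_{k+1}<x<r_{k},\ |y|<x^{\alpha}\}\subset K\subset\Omega^{c}$ with $r_{k}=2^{-k}$. Regarded as a conformal quadrilateral with vertical cross-sections at $x=r_{k},r_{k+1}$ as opposite sides, $C_{k}$ has modulus $M(C_{k})\asymp r_{k}^{\alpha-1}$. The preimage $E_{k}=\hat f^{-1}(C_{k})\subset\overline{\D^{c}}$ has two $\partial\D$-boundary arcs of length $\asymp r_{k}^{1/2}$ placed symmetrically about $1$, and by the isoperimetric inequality $|E_{k}|\gtrsim r_{k}$. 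Applying the $L^{p}$-modulus distortion inequality
\[
M(C_{k})\le\Bigl(\int_{E_{k}}K_{\hat f}^{p}\,dm\Bigr)^{1/p}\Bigl(\inf_{\rho}\int_{E_{k}}\rho^{2p/(p-1)}\,dm\Bigr)^{(p-1)/p},
\]
with $\rho$ ranging over densities admissible for the curve family in $E_{k}$ connecting the preimages of the two cross-sections, and carefully optimizing $\rho$ against the cusp shape of $E_{k}$ dictated by the asymptotic $|f(\zeta)|\asymp|1-\zeta|^{2}$, I would sum over $k$ to obtain
\[
\int_{\hat f^{-1}(K)}K_{\hat f}^{p}\,dm\gtrsim\sum_{k\ge 0}r_{k}^{\,1-2p(\alpha-1)}.
\]
This series diverges precisely when $2p(\alpha-1)\ge 1$, that is, when $p\ge 1/(2(\alpha-1))=s/(2(1-s))$, which under the strict inequality of the hypothesis contradicts $K_{\hat f}\in L^{p}_{\mathrm{loc}}$.

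The main technical obstacle will be extracting the sharp exponent $1-2p(\alpha-1)$ in the last sum: a crude rectangular estimate for $E_{k}$ does not suffice, and one must exploit the precise cusp geometry of $E_{k}$ imposed by the conformal asymptotic $|f(\zeta)|\asymp|1-\zeta|^{2}$ together with the isoperimetric lower bound on $|E_{k}|$. The remaining parts of the argument --- treating the rest of $\Omega^{c}$, controlling non-overlap of the preimages $E_{k}$, and dyadic bookkeeping --- are routine and parallel the corresponding steps in the proof of Theorem~\ref{3point}.
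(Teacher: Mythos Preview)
Your domain is precisely the inward-cusp domain used in \cite{GKT}, and that construction is known to give only the weaker threshold $p>s/(1-s)$; the whole point of the paper is that the factor-of-two improvement to $p>s/(2(1-s))$ requires replacing the single cusp by a snake-like tunnel that fills a sector. Concretely: with the simple cusp of width $\asymp r^{1/s}$, the modulus of curves in $K$ joining the two arcs over $x\in[\overline r,1]$ is $\int_{\overline r}^{1}\frac{dx}{x^{1/s}}\asymp(1/\overline r)^{(1-s)/s}$, whereas the snake, by filling a two-dimensional sector, yields $\asymp(1/\overline r)^{2(1-s)/s}$. Since the preimage-side estimate (via $|f(\zeta)|\asymp|1-\zeta|^{2}$, equivalently Lemma~\ref{apulemma} with $K=1$) is the same in both cases, the simple cusp can never force $p\le s/(2(1-s))$ by any rearrangement of the modulus argument.

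Your attempt to recover the missing factor of two through the dyadic slices $C_{k}$ breaks down at exactly the step you flag as the ``main technical obstacle.'' The set $E_{k}=\hat f^{-1}(C_{k})$ lies in $\overline\D^{\,c}$ and its shape is governed by the extension $\hat f|_{\D^{c}}$, about which you assume nothing beyond $K_{\hat f}\in L^{p}_{\mathrm{loc}}$; the conformal asymptotic $|f(\zeta)|\asymp|1-\zeta|^{2}$ only pins down the \emph{boundary} arcs $E_{k}\cap\partial\D$, not the interior geometry of $E_{k}$. In particular the claim $|E_{k}|\gtrsim r_{k}$ does not follow from any isoperimetric inequality (isoperimetry bounds area from \emph{above} by perimeter, and in any case you have no control on the full perimeter of $E_{k}$), and without it the exponent $1-2p(\alpha-1)$ cannot be reached. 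The correct route is to change the domain to the snake-tunnel construction of Section~3, where the lower bound on $M(f(\Gamma))$ already carries the extra factor of two.
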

Here the positive result  obtained by Guo in \cite{GUO1} shows that if a domain $\Omega$ satisfies three point condition with the control function $h(t)=Ct^s$ we can always find a conformal mapping $f:\D \to \Omega$ with planar extension that satisfies $K_{f}^{p} \in L^{1}_{\text{loc}}$ when $$p < \frac{s^2}{2(1-s^2)}.$$ In the other direction it was shown in \cite{GKT}, using inward cusps, that Theorem \ref{Teoreema2} holds for the choice $$p> \frac{s}{1-s}.$$ Thus we again improve the gap in the exponent by the factor $\frac{1}{2}$. 

\subsection{Modulus of continuity for the inverse and bounds for the rotation}
We investigate boundary behaviour from the point of view of modulus of continuity. When the extension is assumed to have $p$-exponentially integrable distortion (see Section \ref{se:prereq} for the definition),  Zapadinskaya in \cite{Zap} proved the optimal upper bound 
\begin{equation}\label{UppermodulusZapadinskay}
|f(z)-f(w)| \leq \frac{C}{\log^{p}\left( \frac{1}{|z-w|} \right)}
\end{equation}
for the modulus of continuity. We prove a counterpart to this result by establishing the lower bound.
\begin{theorem}\label{LowerModCont}
	Let $f$ be a homeomorphism with $p$-exponentially integrable distortion which is $K$-qua\-si\-con\-for\-mal in the unit disk. Then for all $\varepsilon >0$ there exists a constant $c>0$ such that
	\begin{equation}\label{LowerModContyht}
	|f(z)-f(w)| \geq c|z-w|^{2K(1+\varepsilon)} \qquad \text{for any } z,w\in \overline{ \D}.
	\end{equation} 
	
\end{theorem}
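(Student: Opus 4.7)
The plan has two parts: first, a Stoilow-type factorization reduces the statement to the case where $f|_\D$ is conformal; second, a boundary modulus-of-continuity argument combining Koebe distortion inside $\D$ with the exponential integrability of the distortion outside $\D$ yields the reduced H\"older lower bound.

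Extend the Beltrami coefficient of $f|_\D$ from $\D$ to all of $\cc$ by reflection across $\partial\D$, and let $F : \cc \to \cc$ be the associated normalized $K$-quasiconformal homeomorphism, which satisfies $F(\D) = \D$. Writing $h := f \circ F^{-1}$, the composition $h$ is a planar homeomorphism that is conformal on $\D$. Using the higher integrability of the Jacobian of $F$ (Astala's area distortion theorem) together with H\"older's inequality, one checks that $h$ has $p'$-exponentially integrable distortion on $\cc$ for every $p' < p$. Mori's theorem applied to the globally $K$-quasiconformal map $F$ gives $|F(z) - F(w)| \geq c_K |z - w|^K$, so it suffices to prove the reduced inequality
$$|h(\xi) - h(\eta)| \geq c\, |\xi - \eta|^{2(1+\varepsilon')}, \qquad \xi, \eta \in \overline{\D},$$
for every $\varepsilon' > 0$.

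For $\xi, \eta$ at a definite distance from $\partial\D$ the reduced estimate is immediate from Koebe's distortion theorem for the conformal map $h|_\D$. The essential case is when at least one of $\xi, \eta$ lies on or near $\partial\D$. Setting $\delta := |\xi - \eta|$ and introducing interior approximations $\xi_r := (1-r)\xi$, $\eta_r := (1-r)\eta$ at a depth $r$ to be optimized, Koebe's theorems give the interior estimate
$$|h(\xi_r) - h(\eta_r)| \;\gtrsim\; \frac{\dist(h(\xi_r), \partial h(\D))}{r} \, |\xi_r - \eta_r|,$$
while Zapadinskaya's modulus of continuity bound applied to $h$ yields $|h(\xi) - h(\xi_r)| \leq C/\log^{p'}(1/r)$. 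After the triangle inequality, the reduced inequality follows from the lower bound
$$\dist(h(\xi_r), \partial h(\D)) \;\gtrsim\; \delta^{2(1+\varepsilon')}$$
for a suitable choice $r \asymp \delta$.

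The main technical obstacle is this last lower bound on $\dist(h(\xi_r), \partial h(\D))$. The argument is a modulus of curve family estimate comparing a ring around $\{\xi_r\}$ in the preimage with its image under $h$; the image modulus is controlled via the $\exp L^{p'}$--$L \log^{p'} L$ Orlicz duality applied to the modulus integral, and a Teichm\"uller-type inequality in the image then yields the desired bound on $\dist(h(\xi_r), \partial h(\D))$. The factor $2$ in the exponent encodes contributions from both sides of $\partial\D$: one factor of $1$ from the conformal part in $\D$ and a second factor of $1$ from the exponentially-integrable part in $\cc \setminus \D$. The $\varepsilon'$-loss is inherent in the Orlicz-duality step and can be made arbitrarily small by taking $p'$ large; the factor $K$ in the final exponent then emerges upon combining with Mori's theorem from the Stoilow reduction.
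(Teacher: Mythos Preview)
Your reduction via a Stoilow-type factorization to the conformal case is a natural idea, and the paper's own argument can indeed be viewed as treating the case $K=1$ directly (the factor $K$ in the exponent $2K$ enters through a Beurling-type estimate rather than through Mori). But the core of your proof of the reduced inequality has a genuine gap.

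The problem is the triangle-inequality step. With $r\asymp\delta$ you invoke Zapadinskaya to bound the approximation errors by $|h(\xi)-h(\xi_r)|\leq C/\log^{p'}(1/r)\asymp C/\log^{p'}(1/\delta)$, and you want
\[
|h(\xi)-h(\eta)|\;\geq\;|h(\xi_r)-h(\eta_r)|-|h(\xi)-h(\xi_r)|-|h(\eta)-h(\eta_r)|
\]
to yield $\gtrsim\delta^{2(1+\varepsilon')}$. But the error terms are of order $1/\log^{p'}(1/\delta)$, which for small $\delta$ is \emph{much larger} than any positive power $\delta^{2(1+\varepsilon')}$. The subtraction therefore produces a negative quantity and gives no information. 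No choice of $r\asymp\delta$ (or even $r$ a larger power of $\delta$) rescues this: the logarithmic upper bound from Zapadinskaya will always dominate the polynomial target. Interior approximation plus triangle inequality simply cannot close the argument here; you need a mechanism that compares $h(\xi)$ and $h(\eta)$ directly on the boundary. (A secondary issue: the claim that $h$ has $p'$-exponentially integrable distortion for \emph{every} $p'<p$ is too optimistic --- after change of variables and H\"older with Astala's Jacobian integrability one only gets $p'$ less than roughly $p/K^2$ --- but this is not fatal since any $p'>0$ would suffice if the rest worked.)

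The paper proceeds quite differently and avoids interior approximation altogether. It works directly on the boundary: assuming $|f(z)-f(1)|\leq |z-1|^\alpha$ with $\alpha>2K$, it places two continua $F_1,F_2\subset f(\partial\D)$ of diameter $\ell d'$ containing $f(1)$ and $f(z)$, uses a Beurling estimate $\diam f^{-1}(E)\lesssim(\diam E)^{1/2K}$ to control the preimage arcs $I_1,I_2\subset\partial\D$, and then compares the modulus $M(f(\Gamma))\gtrsim\log(\ell+1)$ of curves joining $F_1$ to $F_2$ with an \emph{upper} bound for the weighted modulus $M_{K_f}(\Gamma)$ of curves joining $I_1$ to $I_2$ through the exterior $\C\setminus\D$. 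That upper bound comes from the paper's annulus estimate (Corollary~\ref{cor:2}) for the conditional modulus $M_{\psi,I}$ with $\psi(x)=e^{px}$, and yields $\alpha\leq 2K\exp\big(c/(p\log(\ell+1))\big)$; letting $\ell\to\infty$ gives the claim. The essential difference is that the paper's modulus comparison takes place \emph{outside} $\overline\D$ and links boundary data directly, so no lossy passage from interior to boundary is needed.
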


We note that Zapadinskaya's upper bound \eqref{UppermodulusZapadinskay} is very close to the classical upper bound $|f(z)-f(w)| \leq C\log^{-p/2}( \frac{1}{|z-w|})$ of the planar mappings with exponentially integrable distortion, see \cite{OnniXiao}, while the lower bound \eqref{LowerModContyht} is surprisingly close to the bound $$|f(z)-f(w)| \geq C|z-w|^{K}$$ for planar $K$-quasiconformal maps. This highlights the different behaviour at outward cusps, which attain the extremality of the upper bound, and inward cusps giving rise to the lower bound. 

As explained in, for example, \cite{AIPS, H2} stretching and rotation are closely intertwined with each other. So it is not surprising that we can use Theorem \ref{LowerModCont} to establish control for pointwise rotation of these maps. Here we will use the upper half-plane instead of the unit disk to make notation simpler.
\begin{corollary}\label{co:rotation}
	Let $f$ be a planar homeomorphism with $p$-exponentially integrable distortion which is $K$-quasiconformal in the upper half-plane and normalized by $f(0)=0$ and $f(1)=1$. Then we have
	\begin{equation}\label{Rotationbound}
	|\arg (f(z))| \leq c  K  \log\left( \frac{1}{|z|} \right) \qquad \text{when $z \in \mathbb{R}$ is small} ,
	\end{equation} 
	where $c$ is a constant independent of any other parameters. 
\end{corollary}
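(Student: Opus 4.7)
The plan is to control the argument dyadically, using a scaling invariance of the hypothesis class. For each integer $k\geq 0$ I set $z_k = 2^{-k}$ and define the renormalized map $f_k(w) := f(z_k w)/f(z_k)$. Because the real dilation $w\mapsto z_k w$ and the post-composition by the nonzero constant $1/f(z_k)$ are both conformal, each $f_k$ is again a planar homeomorphism that is $K$-quasiconformal in the upper half-plane, has $p$-exponentially integrable distortion, and satisfies $f_k(0)=0$, $f_k(1)=1$. Tracking a continuous branch of $\arg f$ along the positive real axis gives the telescoping identity $\arg f(2^{-n}) = \sum_{k=0}^{n-1} \arg f_k(1/2)$. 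The Corollary therefore reduces to the uniform single-scale bound $|\arg g(1/2)| \leq cK$ for every $g$ satisfying the Corollary's hypotheses; for general small $z\in\mathbb{R}$ one chooses $n$ with $2^{-n-1}\leq |z|<2^{-n}$ and absorbs the residual variation on $[|z|,2^{-n}]$ into a final application of Theorem~\ref{LowerModCont}, while the case $z<0$ is treated symmetrically.

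For the single-scale bound I pass to logarithmic coordinates on the upper half-disk $D^{+} = \{w\in\mathbb{H}:|w-1/2|<1/2\}$. Since $g(0)=0\notin g(D^{+})$, a single-valued branch $\tilde g := \log g$ exists on $D^{+}$, is itself $K$-quasiconformal, and satisfies $\tilde g(1)=0$ with $\mathrm{Im}\,\tilde g(1/2) = \arg g(1/2)$. Theorem~\ref{LowerModCont} applied to the boundary pairs $(0,1/2)$ and $(1/2,1)$ together with Zapadinskaya's upper bound~\eqref{UppermodulusZapadinskay} controls $|g(1/2)|$ from below and above and so confines $\mathrm{Re}\,\tilde g(1/2)$ to a bounded interval. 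The imaginary part is then controlled via a modulus/extremal-length argument for the quadrilateral $D^{+}$ with distinguished boundary arcs $[0,1/2]$ and $[1/2,1]$: the $K$-quasiconformality of $\tilde g$ forces the modulus of the image quadrilateral to stay within a factor $K$ of the source modulus, which in turn bounds $|\mathrm{Im}\,\tilde g(1/2)|$ by a constant times $K$. Summing the single-scale estimates then yields $|\arg f(z)| \leq cK\log(1/|z|)$.

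The main obstacle is the single-scale estimate and, in particular, producing the correct $O(K)$ (rather than $O(K^2)$) dependence on the quasiconformal constant. The linear-in-$K$ behaviour is dictated by the genuine $K$-quasiconformality in the upper half-plane and is decoupled from the exponential-integrability parameter of the extension, which only enters through the two-sided control of $|g(1/2)|$. The sharpness of this dependence is visible in the logarithmic-spiral example $g(w) = w|w|^{i\beta}$, for which $K$ grows linearly in $|\beta|$ and $|\arg g(1/2)| = |\beta|\log 2$; one expects the spiral maps to be extremal, which guides the setup of the modulus estimate above.
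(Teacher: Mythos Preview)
Your renormalization strategy has a genuine gap. The single-scale bound $|\arg g(1/2)|\leq cK$ cannot hold with a \emph{universal} constant $c$ over all $g$ satisfying the Corollary's hypotheses. The hypothesis ``$p$-exponentially integrable distortion'' is purely qualitative: it fixes $p$ but places no bound on $\int e^{pK_g}$. Both Theorem~\ref{LowerModCont} and Zapadinskaya's estimate \eqref{UppermodulusZapadinskay} carry constants that depend on this integral, so the two-sided control of $|g(1/2)|$ you invoke is not uniform. Under your rescaling, $\int_{B(0,2)}e^{pK_{f_k}}=4^{k}\int_{B(0,2^{1-k})}e^{pK_f}$, and there is no reason for this to stay bounded (the origin need not be a Lebesgue point of $e^{pK_f}$). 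Thus the constants $c_k$ in the bounds $|\arg f_k(1/2)|\leq c_kK$ may grow with $k$, and the telescoped sum need not be $O(n)$. Note also that the Corollary is an \emph{asymptotic} statement (``when $z$ is small'', with the threshold depending on $f$); a uniform single-scale bound would upgrade it to all $|z|<1$ with a universal constant, which is strictly stronger and not what is being claimed.

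Separately, the extremal-length step is not well posed. The arcs $[0,1/2]$ and $[1/2,1]$ share the vertex $1/2$, so they are not opposite sides of a quadrilateral, and $\tilde g$ is not even defined at $0$ since $g(0)=0$. Even with a corrected configuration, the modulus of the image quadrilateral is a conformal invariant and does not by itself pin down $\mathrm{Im}\,\tilde g(1/2)$ without further normalization. (Incidentally, for the spiral $g(w)=w|w|^{i\beta}$ one has $K\sim\beta^2$ for large $\beta$, not $K\sim\beta$.)

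The paper proceeds quite differently: it applies the $K$-quasiconformal modulus inequality once to the curve family $\Gamma$ in $\mathbb{H}$ joining $E=[r,1]$ to $F=(-\infty,0]$. On the domain side $M(\Gamma)\lesssim\log(1/r)$. On the image side, if $f([r,1])$ winds $n(r)$ times around the origin, every radial half-line contains $n(r)$ disjoint segments in $f(\Gamma)$; Cauchy--Schwarz and the arithmetic--harmonic mean inequality give $M(f(\Gamma))\gtrsim n(r)^2/\log(c_f/r_f)$, and Theorem~\ref{LowerModCont} is used once to bound $\log(1/r_f)\lesssim K\log(1/r)$ for $r$ small. Combining yields $n(r)\lesssim K\log(1/r)$ with a universal implicit constant, the $f$-dependence being absorbed entirely into the smallness threshold on $r$.
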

For planar $K$-quasiconformal mappings the optimal pointwise bound for rotation was shown in \cite{AIPS} to be 
\begin{equation}\label{rotationkvasi}
|\arg (f(z))| \leq \left( K - \frac{1}{K} \right)  \log\left( \frac{1}{|z|} \right)+o \left(\log \left( \frac{1}{|z|} \right) \right),
\end{equation}
so for any fixed $K>1$ we get an analogous result to stretching and give a bound that is surprisingly close to that of planar quasiconformal mappings.

In the first part of  Section \ref{se:modulus} we  provide rather general modulus estimates for annuli in relation to quasiconformal maps with $\psi (K_f)$ integrable, where $\psi$ is a given convex gauge function, see Theorem \ref{le:3} and Corollary \ref{cor:2} below. These estimates are used in the proof of Theorem \ref{LowerModCont}. The more general modulus estimates and the proof of  Theorem \ref{LowerModCont} can be  combined to treat  mappings with sub-exponentially integrable distortion. Corollary \ref{cor:subexp} writes down explicitly the obtained bound for the class of maps with $\exp\Big(\frac{pK_f}{1+\log K_f}\Big)\in L^1_{loc} $. Again the result improves in a drastic manner the known continuity bound for the inverses when no quasiconformality in $\D$ is assumed.

\subsection{Regularity at cusps.} Finally we also study the integrability  of the derivative of
conformal mappings of the unit disk that have a planar extension with  exponentially integrable distortion, and such that the domain $f(\D)$ is $C^{1+\varepsilon}$-regular outside outward cusps. We refer to Section \ref{se:regularity} for the precise definition of  $C^{1+\varepsilon}$-cusps.
\begin{theorem}\label{cor:exp_p_cusps}
Let $f:\D\to\C$ be a conformal map that extends to a homeomorphism of $p$-exponentially integrable distortion to a neighbourhood of $\D$. Assume that  $f(\D)$ has $C^{1+\varepsilon}$-regular boundary apart from finitely many points that are outward $C^{1+\varepsilon}$-cusps. Then
$$
\int_\D |f'(z)|^2 \log^{2p+ 1-\varepsilon}(e + |f'(z)|)dA(z)<\infty
$$
for any $\varepsilon >0$.
\end{theorem}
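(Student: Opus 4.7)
The proof localizes around cusp preimages. Away from a neighborhood of each cusp preimage $\zeta_j := f^{-1}(w_j)$, the boundary $\partial f(\D)$ is $C^{1+\varepsilon}$-regular, and by the Kellogg--Warschawski theorem $f'$ extends continuously (in fact H\"older continuously) to these parts of $\overline{\D}$. Hence $|f'|$ is bounded on $\D \setminus \bigcup_j B(\zeta_j,r_0)$ and the corresponding pieces of the integral are trivially finite. It therefore remains to estimate
$$\int_U |f'(z)|^2 \log^{2p+1-\varepsilon}(e+|f'(z)|)\,dA(z), \qquad U = B(\zeta_0,r_0)\cap\D,$$
for a single cusp preimage $\zeta_0$ with $w_0 := f(\zeta_0)$ an outward $C^{1+\varepsilon}$-cusp.

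Using the area formula for the conformal map $f|_\D$ (whose Jacobian is $|f'|^2$), change variables $w=f(z)$ to rewrite the integral as
$$\int_{f(U)}\log^{2p+1-\varepsilon}\!\bigl(e+|f'(f^{-1}(w))|\bigr)\,dA(w).$$
By Koebe's distortion theorem $|f'(z)|\asymp\dist(f(z),\partial f(\D))/(1-|z|)$, so $\log|f'(z)|\leq |\log(1-|z|)|+C$ and the task becomes to show that $|\log(1-|f^{-1}(w)|)|^{2p+1-\varepsilon}$ is integrable on $f(U)$; equivalently, to control the distribution $s \mapsto \int_{\{z \in U:\,1-|z|<s\}}|f'(z)|^2\,dA(z)$ as $s\to 0^+$.

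To achieve the required decay I would combine: (i)~Zapadinskaya's modulus of continuity bound \eqref{UppermodulusZapadinskay} for the extension, which yields $|f(z)-w_0|\lesssim 1/\log^p(1/|z-\zeta_0|)$; (ii)~Theorem~\ref{LowerModCont} with $K=1$, which gives the complementary lower bound $|f(z)-w_0|\gtrsim |z-\zeta_0|^{2(1+\eta)}$ for arbitrary $\eta>0$; (iii)~the geometric constraint on $\dist(w,\partial f(\D))$ imposed by the outward $C^{1+\varepsilon}$-cusp shape at $w_0$; and (iv)~the modulus-of-annuli estimates from Section~\ref{se:modulus}, applied to the extension of $f$ in annular neighborhoods of $\zeta_0$. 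Together these should control the above distribution function at the rate needed to conclude the desired $L^{2p+1-\varepsilon}$-integrability.

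The main obstacle is obtaining the sharp constant $2p+1-\varepsilon$. A pointwise bound of the form $|f'(z)|\lesssim 1/((1-|z|)\log^p(1/(1-|z|)))$ kills only $\log^{2p}$ from the integrand, which is not enough to beat the non-integrable factor $1/(1-|z|)^2$; the extra $+1$ is recovered only by replacing pointwise estimates with integrated ones, exploiting the conformal identity $\int_\D |f'|^2\,dA = |f(\D)|$. Aligning the various small losses arising from the cusp regularity, from the lower modulus bound, and from the modulus-of-annuli estimates into a single arbitrary $\varepsilon>0$ is the most delicate bookkeeping step of the argument.
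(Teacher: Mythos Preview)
Your proposal is not a proof but a plan, and the plan has a genuine gap precisely at the point you yourself flag as ``the main obstacle.'' You assemble the right raw ingredients (Zapadinskaya's bound, Koebe, the area identity $\int_\D |f'|^2 = |f(\D)|$), but nowhere do you show how they combine to give the exponent $2p+1-\varepsilon$. Your item~(ii), the lower bound from Theorem~\ref{LowerModCont}, controls how \emph{small} $|f(z)-w_0|$ can be and hence how small $|f'|$ can get; it says nothing about the large values of $|f'|$ that drive the integral. Item~(iv) is similarly not connected to the actual estimate. The crucial ``$+1$'' in the exponent does not fall out of distributional bookkeeping on $|f'|$ alone; one needs a structural fact about how the large values of $|f'|$ are arranged near the cusp preimage.

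The paper's argument supplies exactly that missing idea, and it is quite different from yours. The key is Lemma~\ref{estiLem} (packaged as Theorem~\ref{lConv}): if near the cusp preimage $|f'|$ is \emph{essentially monotone} along the boundary arc (this is the content of Lemma~\ref{le:lineEstim}, proved via harmonic measure comparison and a boundary Harnack principle), then a Whitney decomposition plus Jensen reduces $\int \Phi(|f'|)$ to a one-dimensional integral $\int_0^1 \Phi(u(x))\,dx$, where $u$ is a monotone function with $\int_x^y u \lesssim \psi(y-x)$. The Hardy--Littlewood--P\'olya rearrangement inequality then gives $\int_0^1 \Phi(u)\,dx \leq \int_0^1 \Phi(\psi')\,dx$ for every convex $\Phi$. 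With $\psi(t)=C\log^{-p}(1/t)$ one has $\psi'(t)\asymp t^{-1}\log^{-p-1}(1/t)$, and plugging in $\Phi(x)=x^2\log^{2p+1-\varepsilon}(e+x)$ reduces the question to the elementary convergence of $\int_0^{1/2} t^{-1}\log^{-1-\varepsilon}(1/t)\,dt$ (after the Whitney sum over scales absorbs one factor of $t$). The monotonicity of $|f'|$ and the HLP inequality are the mechanism that turns the modulus-of-continuity bound into the sharp integrability exponent; your outline contains no substitute for this step.
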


\subsection*{Acknowledgements.} We are grateful for the anonymous referee for careful reading of the paper and his or her thoughtful comments. The work was supported by the Finnish Academy Coe `Analysis and Dynamics',  ERC grant 834728 Quamap and the Finnish Academy projects {1309940 and 13316965}. 

\section{Prerequisites} \label{se:prereq}
Let  $f:\Omega_1 \to \Omega_2$ be a sense-preserving homeomorphism between planar domains $\Omega_1, \Omega_2 \subset \mathbb{C}$. We say that $f$ is a \emph{$K$-quasiconformal mapping} for some $K \geq 1$ if $f \in W^{1,2}_{\text{loc}}$ and  the distortion inequality
\begin{equation*}
|Df(z)|^2 \leq KJ_f (z)
\end{equation*}
is satisfied almost everywhere. Here 
\begin{equation*}
|Df(z)|=\max \{|Df(z) e|: e\in \mathbb{C}, |e|=1\},
\end{equation*}
whereas $J_f(z)$ is the Jacobian of the mapping $f$ at the point $z$. 

Furthermore, we  say that $f$ has \emph{finite distortion} if the following conditions hold:
\begin{itemize}
	\item $f\in W_{\text{loc}}^{1,1}(\Omega_1)$
	\item $J_f(z)\in L^{1}_{\text{loc}}(\Omega_1)$
	\item $|Df(z)|^2\leq K(z)J_f(z) \qquad \text{almost everywhere in $\Omega_1$},$
\end{itemize}
for a measurable function $K(z)\geq 1$, which is finite almost everywhere. The smallest such function is denoted by $K_f(z)$ and called the distortion of $f$. Generally speaking mappings of finite distortion in their full generality  have too wild behaviour and thus additional restrictions are usually needed.These restrictions are usually additional assumptions on the distortion function $K_f$. We say that a mapping of finite distortion has \emph{$p_1$-exponentially integrable distortion} with a parameter $p_1>0$ if 
\begin{equation*}
e^{p_1 K_{f}(z) } \in L^{1}_{\text{loc}} 
\end{equation*}
and \emph{$p_2$-integrable distortion} with parameter $p_2 \geq 1$ if 
\begin{equation*}
K_{f}(z)\in L_{\text{loc}}^{p_2}(\Omega).
\end{equation*} 
These two classes of mappings are most commonly studied, but one can, of course, use also other integrability conditions for the distortion. For a closer look on planar mappings of finite distortion see \cite{AIM}. In the present paper we only consider \textit{homeomorphic} mappings of finite distortion.
\medskip

We say that a  bounded Jordan domain $\Omega$ satisfies the \emph{three point condition} with an increasing control function $h: [0,\infty) \to [0, \infty)$ if there exists a constant $C\geq 1$ such that for each pair of points $x,y \in \partial \Omega$ we have
\begin{equation*}
\min_{i \in \{1,2\}} \text{diam}(\gamma_i) \leq h\left(C|x-y|\right),
\end{equation*} 
where $\gamma_1$ and $\gamma_2$ are the components of $\partial \Omega \setminus \{x,y\}$. 
\medskip

Let $f: \mathbb{C} \to \mathbb{C}$ be a mapping of finite distortion and fix a point $z_0 \in \mathbb{C}$. In order to study the pointwise rotation of $f$ at the point $z_0$, we fix an argument $\theta\in[0,2\pi)$, and then look at how the quantity
$$\arg (f(z_0+te^{i\theta})-f(z_0))$$ 
changes as the parameter $t$ goes from 1 to a small $r$. This can also be understood as the winding of the path $f\left( [z_0+re^{i\theta}, z_0+e^{i\theta}] \right)$ around the point $f(z_0)$. One can then either study the maximal pointwise spiraling, in which case we need to consider all directions $\theta$,
\begin{equation}\label{spiraling}
\sup_{\theta \in [0,2\pi)} |\arg (f(z_0+re^{i\theta})-f(z_0)) - \arg (f(z_0+e^{i\theta})-f(z_0))  |,
\end{equation}
or, as in Corollary \ref{co:rotation}, we can restrict ourselves to some fixed directions $\theta$ in the above limit. 

Whichever we choose, the maximal pointwise rotation is precisely the behavior of the above quantity \eqref{spiraling} when $r\to 0$. In this way, we say that the map $f$ \emph{spirals at the point $z_0$ with a rate function $g$}, where $g: [0, \infty) \to [0, \infty)$ is a decreasing continuous function, if 
\begin{equation}\label{Spiral rate}
\limsup_{r\to 0} \frac{\sup_{\theta \in [0,2\pi)} |\arg (f(z_0+re^{i\theta})-f(z_0)) - \arg (f(z_0+e^{i\theta})-f(z_0))  |}{g(r)} = C
\end{equation}
for some constant $C>0$. Finding maximal pointwise rotation for a given class of mappings equals finding the maximal spiraling rate for this class. Note that in \eqref{Spiral rate} we must use limit superior as the limit itself might not exist. Furthermore, for a given mapping $f$ there might be many different sequences $r_n \to 0$ along which it has profoundly different rotational behaviour. 
For more on these definitions we refer to  \cite{AIPS, H4}. 
\medskip 

In proofs of our theorems the \emph{modulus of path families} will play an important role. We provide here the main definitions, but interested reader can find a closer look at the topic in \cite{V}. The image of a line segment $I$ under a continuous mapping is called a path and we denote by $\Gamma$  a family of paths. Given a path family $\Gamma$ we say that a Borel measurable function $\rho:\mathbb{C} \to [0,\infty]$ is admissible with respect to  it if any locally rectifiable path $\gamma \in \Gamma$ satisfies
\begin{equation*}
\int_{\gamma} \rho(z) |dz| \geq 1.
\end{equation*}
Denote the modulus of the path family $\Gamma$ by $M(\Gamma)$ and define it by 
\begin{equation*}
M(\Gamma)= \inf_{\rho \text{ admissible}} \int_{\mathbb{C}} \rho^{2}(z) dA(z).
\end{equation*}
As an intuitive rule the modulus is large if the family $\Gamma$ has ``lots" of short paths, and is small if the paths are long and there is not ``many" of them. 

We will also need a \emph{weighted} version of the modulus. Let a weight function $\omega: \mathbb{C} \to [0, \infty)$, which in our case will always be the distortion function $K_f$, be  measurable and locally integrable.  Define the weighted modulus $M_{\omega}(\Gamma)$ by
\begin{equation*}
M_{\omega}(\Gamma)= \inf_{\rho \text{ admissible}} \int_{\mathbb{C}} \rho^{2}(z) \omega(z) dA(z).
\end{equation*}
Finally, we will need the modulus inequality
\begin{equation}\label{Modeq}
M(f(\Gamma)) \leq M_{K_f}(\Gamma)
\end{equation}
which holds for any mapping of finite distortion $f$ for which the distortion $K_f$ is locally integrable, see \cite{H3}. 
In Section \ref{se:modulus} we also introduce the notion of \emph{conditional modulus}.

\section{Three point property}
In this section we study the three point property for the image of the unit disk under  quasiconformal mappings that admit extension to a global mapping with controlled distortion. In particular, we prove Theorems \ref{3point} and \ref{Teoreema2}.

The example for both theorems is constructed by a similar principle: our domain $\Omega$ will be the unit disk with a `snake-like tunnel'  removed inside a sector with angle $\alpha$. The tunnel folds itself in a self-similar manner so that it essentially covers the whole sector. We will first concentrate on the proof of Theorem \ref{3point}, and then  the widths of both the tunnel and its complement at the distance $r$ from the origin are chosen to be comparable to
$$
 \frac{r}{\log^{\frac{1}{2}+\varepsilon}(1+\frac{1}{r})}.
$$ 
Furthermore, apart from the small ``turning areas'', that have negligible area when $r$ is small, the snake consists of vertical tubes. See Figure \ref{Snake} for an illustration of $\Omega$.

	\begin{figure}[ht]	
		\begin{center}
			\includegraphics[scale=0.6]{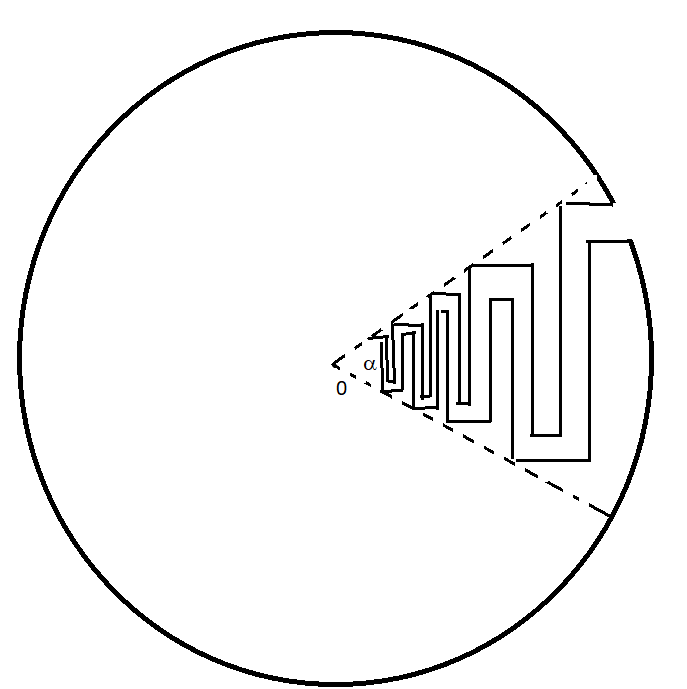}
			\label{Kvasikuva}
		\end{center}
		\caption{Domain $\Omega$ with snake-like tunnel}
		\label{Snake}
	\end{figure}
	
	We will first  show that there does not exist a $K$-quasiconformal mapping $f$ such that $f(\D)=\Omega$, for any choice of $\varepsilon >0$, which has extension with exponentially integrable distortion. As a second step we will then verify that these domains $\Omega$ satisfy the required 3-point property. 
	
	Towards our first goal, we next   bound the diameter of the pre-image of a small piece of the snake like tunnel containing  the tip (i.e. the origin). For a given $\overline{r}>0$, let $A_{\overline{r}}$ be the maximal initial part of the tunnel starting from the tip (i.e. maximal subdomain with respect to inclusion) that is contained in $B(0,\overline{r})$ and is obtained by a single crosscut along the real axis.
	
	\begin{lemma}\label{apulemma}
	Let $A_{\overline{r}}$ and $\Omega$ be as above.	Assume that $f: \D \to \Omega$ is $K$-quasiconformal and denote also by $f$ the continuous extension to the boundary which is guaranteed to exist by Carath\'eodory's theorem.  Denote $E_{\overline{r}} =  \partial A_{\overline{r}} \cap \partial \Omega$. Then  
		\begin{equation}
		\mathrm{diam} \, f^{-1}(E_{\overline{r}})  \geq C_f \left( \mathrm{diam} \, E_{\overline{r}}  \right)^{\frac{K}{2-\varepsilon_\alpha}} \geq C \overline{r}^{\frac{K}{2-\varepsilon_\alpha}},
		\end{equation}
		for all $\overline{r}$ small enough. Furthermore, $\varepsilon_\alpha \to 0$ when $\alpha \to 0$. 
	\end{lemma}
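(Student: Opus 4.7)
The plan is a modulus comparison exploiting the quasiconformality of $f$ together with the sector geometry of $\Omega$.

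Set $d=\mathrm{diam}\,f^{-1}(E_{\overline r})$ and $\zeta_0=f^{-1}(0)$, so that $J:=f^{-1}(E_{\overline r})\subset B(\zeta_0,d)\cap\partial\D$ (by Carath\'eodory's theorem, $\zeta_0$ is the center of the arc $J$). Consider the half-annulus $A=\{d<|z-\zeta_0|<1/4\}\cap\D$. Via the conformal change $z\mapsto\log(z-\zeta_0)$, it becomes a rectangle of width $W=\log(1/(4d))$ and height $H=\pi$, under which the two arcs $A_1,A_2\subset\partial\D\cap\partial A$ lying on either side of $\zeta_0$ correspond to the top and bottom sides. Hence the family $\Gamma_\D$ of curves in $A$ joining $A_1$ and $A_2$ has modulus $W/H=\log(1/(4d))/\pi$, and $K$-quasiconformality of $f$ gives
\[
\frac{\log(1/(4d))}{\pi}\;=\;M(\Gamma_\D)\;\leq\;K\,M(f(\Gamma_\D)).
\]
Thus the lemma reduces to showing $M(f(\Gamma_\D))\lesssim \log(1/\overline r)/(2\pi-\alpha)$.

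The curves in $f(\Gamma_\D)$ lie in $\Omega$ and connect two boundary arcs $f(A_1),f(A_2)\subset\partial\Omega$ that sit on the snake walls just outside $\sigma$ (i.e.\ at radii $\geq \overline r$ along the continuation of the snake). Any such curve must separate $E_{\overline r}$ (near the origin) from the far boundary $f(\partial\D\setminus B(\zeta_0,1/4))$. Since the snake is confined to a sector of angle $\alpha$, a curve achieving this separation must either (i) encircle the snake through the outside-sector region of angular extent $2\pi-\alpha$, or (ii) hug the snake inside the thin complement strip on the $E_{\overline r}$-side. I construct an admissible metric $\rho$ as a sum of two pieces: on the outside-sector part of the annulus $\{\overline r<|z|<1/4\}$, take $\rho_{\mathrm{out}}(z)=((2\pi-\alpha)|z|)^{-1}$; on the snake-complement strip adjacent to $E_{\overline r}$, take $\rho_E\equiv 1/L$ where $L\sim \overline r\,\log^{1/2+\varepsilon}(1/\overline r)$ is the length of $E_{\overline r}$. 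Curves of type (i) satisfy $\int_\gamma\rho\,|dz|\geq \int|d\phi|/(2\pi-\alpha)\geq 1$ because they traverse angular extent at least $2\pi-\alpha$, while curves of type (ii) satisfy $\int_\gamma\rho\,|dz|\geq L_\gamma/L\geq 1$ because they essentially trace $E_{\overline r}$. A direct computation then yields
\[
M(f(\Gamma_\D))\;\leq\;\int\rho^2\,dA\;\leq\;\frac{\log(1/(4\overline r))}{2\pi-\alpha}\;+\;\frac{\alpha}{4\,\log^{1+2\varepsilon}(1/\overline r)},
\]
where the second summand, from the $E_{\overline r}$-side contribution, is subleading thanks to the specific width $w(r)\sim r/\log^{1/2+\varepsilon}(1/r)$.

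Combining the two bounds gives
\[
\log(1/(4d))\;\leq\;\frac{\pi K}{2\pi-\alpha}\,\log(1/(4\overline r))\;+\;o(\log(1/\overline r)),
\]
which rearranges to $d\geq C\,\overline r^{K/(2-\varepsilon_\alpha)}$, with $\varepsilon_\alpha=\alpha/\pi+o(1)\to 0$ as $\alpha\to 0$, and the second inequality in the lemma follows from $\mathrm{diam}\,E_{\overline r}\sim\overline r$.

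The main technical obstacle will be verifying admissibility of $\rho$ uniformly over all curves in $f(\Gamma_\D)$, including ``mixed'' curves that partly encircle and partly hug the snake; here one has to check that each piece of a separating curve really does contribute to either $\int\rho_{\mathrm{out}}$ or $\int\rho_E$ sufficiently. The precise $\log^{1/2+\varepsilon}$-power in the snake widths is what makes the $E_{\overline r}$-side contribution genuinely subleading, and hence preserves the leading exponent $K/(2-\varepsilon_\alpha)$; a different power of $\log$ would alter the subleading correction and potentially the final exponent.
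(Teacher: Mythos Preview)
Your overall architecture is the same as the paper's: a modulus comparison between a curve family in $\D$ separating the two sides of $f^{-1}(E_{\overline r})$ and its image in $\Omega$, linked by the $K$-quasiconformal inequality. On the disk side the paper does essentially what you do, using concentric circular arcs about a point $\overline z\in\partial\D$ equidistant from $f^{-1}(U)$ and $f^{-1}(F)$ to get $M(\Gamma^{-1})\ge\pi^{-1}\log(C/d)$; your rectangle computation via $z\mapsto\log(z-\zeta_0)$ is a slight overstatement (the arc $\partial\D$ is curved, so the image is not an exact rectangle), but the circle argument gives the same bound and your $\zeta_0=f^{-1}(0)$ works just as well as the paper's midpoint.

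The substantive difference is on the image side, and here the paper's route is both simpler and avoids precisely the gap you flag. The paper takes the \emph{single} radial density $\rho_0(z)=\big((2\pi-\alpha)|z|\big)^{-1}$ on all of $\Omega\cap\{\overline r/2e^{2\pi}\le|z|\le1\}$ and proves admissibility by a clean dichotomy: either $\gamma$ meets the inner circle $S(0,\overline r/2e^{2\pi})$, in which case its \emph{radial} variation alone already gives $\int_\gamma\rho_0\,|dz|\ge(2\pi-\alpha)^{-1}\int_{\overline r/2e^{2\pi}}^{\overline r/2}\frac{dr}{r}=\frac{2\pi}{2\pi-\alpha}>1$; or $\gamma$ stays in the simply connected set $\Omega\cap\{|z|>\overline r/2e^{2\pi}\}$, where a single-valued branch of $\arg z$ exists and $|\arg\gamma(1)-\arg\gamma(0)|\ge2\pi-\alpha$, so the \emph{angular} variation yields $\int_\gamma\rho_0\,|dz|\ge1$. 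This completely replaces your $\rho_E$ term and the delicate ``mixed curve'' issue. It also shows that your final remark is an artefact of your construction: the lemma and its proof are entirely insensitive to the tunnel width (indeed the paper reuses the same lemma verbatim for the polynomial-width snake of Theorem~\ref{Teoreema2}); the specific power $\log^{1/2+\varepsilon}$ plays no role here. Your claim that type~(ii) curves ``essentially trace $E_{\overline r}$'' and hence have length $\ge L$ is in fact doubtful---a separating curve can dip toward the origin through the complement of the sector at arbitrarily small radius---so the paper's radial-variation case is the right fix, not a length bound against $L$.
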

	
	\begin{proof}
	We will apply modulus of path families and  follow rather closely the proof of \cite[Lemma 4.2]{GKT}. So, let $U$ and $F$ be the two components of the boundary of the tunnel that remain  after removing the part $E_{\overline{r}}$ from the tunnel. Let $\Gamma$ be the family of paths $\gamma \subset \Omega$ connecting $U$ and $F$. The estimate for the modulus of $\Gamma^{-1}:=f^{-1} \left( \Gamma \right)$ is rather standard, but we provide the details for the reader's convenience. 
	
	That is, we start by choosing point $\overline{z} \in \partial \D$ such that $\dist(\overline{z},f^{-1}(U))=\dist(\overline{z},f^{-1}(F))=a$. Note that this distance is essentially $\frac{\text{diam}  f^{-1}(E_{\overline{r}})}{2}$ and denote by $A$ the minimum of the distances between $\overline{z}$ and the other end point of $f^{-1}(U)$ and $f^{-1}(F)$. Clearly $A$ does not depend on $\overline{r}$ and is just a constant bounded from above by 2.   
	
	For each $t$ with $a<t<A$ we see that the circle $S(\overline{z},t)$ intersects both $f^{-1}(U)$ and $f^{-1}(F)$. Thus for any admissible $\rho$ inside the unit disk we have
	\begin{equation*}
	1 \leq \left( \int_{S(\overline{z},t)} \rho \, |dz| \right)^2 \leq r \pi \int_{S(\overline{z},t)} \rho^2\,  |dz|
	\end{equation*}
	Thus we can use Fubini's theorem to obtain  
	\begin{equation}\label{Alaraja}
	M(\Gamma^{-1}) \geq  \int_{a}^{A} \int_{S(\overline{z},t)} \rho^2 \, [dz|dt = \int_{a}^{A} \frac{1}{\pi t} dt \geq  \frac{\log\left( \frac{C}{\text{diam}  f^{-1}(E_{\overline{r}})  } \right)}{\pi},
	\end{equation}
 To estimate the modulus $M(\Gamma)$ from above define
	\begin{equation*}
	\rho_{0}(z) = 
	\begin{cases}
	\frac{1}{(2\pi - \alpha)|z|} &\quad\text{if $ \overline{r}/2e^{2\pi} \leq |z|\leq 1$ and $z\in\Omega$,}\\
	0 &\quad\text{otherwise}. \\
	\end{cases}
	\end{equation*}
	We claim that $\rho_0$ is admissible for the path family $\Gamma$. To see this we look at two possibilities. In the first alternative the curve $\gamma\in\Gamma$  intersects the circle  $S(0,\overline{r}/2e^{2\pi})$. Then, just observing the radial variation and noting that $\int_{\overline{r}/2e^{2\pi}}^{\overline{r}/2}\frac{dr}{r}=2\pi>2\pi - \alpha$ we obtain that $\rho$ is admissible for $\gamma.$ In the second case $\gamma$ stays in the simply connected domain 
	$\Omega\cap\{\overline{r}/ 2e^{2\pi}<|z|<1\}$, where we may define a branch of $\arg(z)$. Thus, letting  $\gamma$ be parametrized on $[0,1]$  we obtain 
	$$
	\int_\gamma \rho|dz|\geq (2\pi-\alpha)^{-1}\int_\gamma |d(arg(z))|\geq (2\pi-\alpha)^{-1}\big|\arg(\gamma(1))-\arg(\gamma(0))|\geq 1,
	$$ and again $\rho$ is admissible for $\gamma.$ 

	Thus we can use $\rho_0$ to estimate  
	\begin{equation}\label{ylaraja}
	M(\Gamma)  \leq \int \rho_{0}^{2} \, dA(z)\leq \frac{1}{2\pi-2\alpha}\log\left( \frac{C}{\overline{r}} \right).
	\end{equation}
	Since $f$ is $K$-quasiconformal in the unit disk we have
	\begin{equation*}
	M(\Gamma^{-1})\leq K M(\Gamma),
	\end{equation*}
	and the claim follows by combining  \eqref{Alaraja} with \eqref{ylaraja}.\end{proof}
	
\subsection{Proof of Theorem \ref{3point}}	
	Assume we have a planar mapping $f$ that is $K$-quasiconformal inside $\D$, has $p$-exponentially integrable distortion and $f(\D)=\Omega$. Fix $\alpha$ which gives the width of the snake (this angle actually plays no role here but will be important for $p$-integrable extension case). Then by Lemma \ref{apulemma} we have that 
	\begin{equation}\label{aputulos}
	\text{diam}  f^{-1}(E_{\overline{r}})  \geq C \overline{r}^{\frac{K}{2-\varepsilon_\alpha}}
	\end{equation}
	whenever $\overline{r}$ is small and $E_{\overline{r}}$ is the $\overline{r}$-neighbourhood of the tip of the tunnel as before. Let also $U$ and $F$ be as before the remaining  components of the boundary of the tunnel. Let $\Gamma$ be the family of paths connecting the preimages of these components $U_0 :=f^{-1}\left(U \right)$ and $F_0 :=f^{-1}\left( F \right)$ in the exterior $\C\setminus\overline{\D}$ of the unit disk. We aim to use the general modulus inequality \eqref{Modeq}
	\begin{equation}\label{modey}
	M(f(\Gamma)) \leq M_{K_f}(\Gamma).
	\end{equation}
	with good estimates for both of these moduli. 
	
	\medskip
		We start with estimating $M_{K_f}(\Gamma)$ from above when $\overline{r}$ is small.  
	As we estimate modulus from above it is enough to find one suitable admissible function. To this end let 
	\begin{equation*}
	d= \dist (U_0 , F_0)= \diam f^{-1}(E_{\overline{r}})
	\end{equation*}
	and let $U_{0,i} \subset U_0$ be the set of points $z\in U_0$ satisfying $ 2^{i}d < \dist(z, F_0) \leq 2^{i-1}d$. Here $i$ goes from 1 to $n$, which is defined as the smallest integer such that $2^nd > \diam U_0$. As we can assume that sets $U_0$ and $F_0$ are small compared to the unit disk, by taking subsets of $U_0$ and $F_0$ if needed, sets $U_{0,i}$ are segments of $\partial\D$ with one end point having distance $2^{i-1}d$ to the set $F_0$ while the other end point has distance $2^{i}d$. Thus the sets $U_{0,i}$ are disjoint and cover the set $U_0$.
	
	Denote $S_i=\{z\in \C\setminus \overline{\D}: \dist(z,U_{0,i}) < 2^{i-1}d\}$ and define (with the notation $S_0 = \emptyset$)
	\begin{equation}\label{rho_0}
	\rho_{0}(z) = 
	\begin{cases}
	\frac{1}{2^{i-1}d} & \quad \text{if $z \in S_i \setminus S_{i-1},$} \\
	0 & \quad \text{otherwise.}
	\end{cases}
	\end{equation}
	The function $\rho_0$ is admissible for $\Gamma$ as $S_i \cap F_0 = \emptyset $ for every $i$ and $\rho_0(z) \geq \frac{1}{2^{i-1}d}$ when $z\in S_i$.

	With an admissible function at hand we use the pointwise estimate 
	\begin{equation*}
	\frac{1}{p}(K_f p)\rho_{0}^{2} \leq \frac{1}{p} \left( e^{p K_f} -1\right)+\frac{1}{p} \rho_{0}^{2} \log(1+\rho_{0}^{2})
	\end{equation*}
	to get rid of the weight. This leads to an estimate
	\begin{equation*}
	M_{K_f}(\Gamma) \leq \int K_f(z) \rho_{0}^{2}(z)  dA(z) \leq \frac{1}{p}\int_{B(0,5)} e^{pK_f(z)}  dA(z) \quad + \frac{1}{p} \int \rho_{0}^{2} \log(1+\rho_{0}^{2})  dA(z). 
	\end{equation*}
	The first integral is bounded above by a constant $\alpha_f$ due to the assumption on the distortion, so we can concentrate on the second integral. First note  
	\begin{equation*}
	\log(1+\rho_{0}^{2}) \leq 3 \log\left( \frac{1}{d} \right) \leq C_K \log \left( \frac{1}{\overline{r}} \right)
	\end{equation*}
	since 
	\begin{equation*}
	\dist(U_0,F_0)=d \geq \overline{r}^{C_K},
	\end{equation*}
	by Lemma \ref{apulemma}, where $K$ is the quasiconformality constant inside $\D$. 
	
		To estimate the rest of the integral we note that the area of $S_i$ is bounded from above by $C \left(2^{i-1}d\right)^2$ for each $i$ with fixed $C$. This allows us to estimate
	\begin{equation*}
	\int \rho_{0}^{2}  dA(z) \leq \sum_{i=1}^{n} \int_{S_i} \left(\frac{1}{2^{i-1}d}\right)^2  dA(z) \leq Cn,
	\end{equation*}
	where $n$ was chosen to be the smallest integer such that $2^nd > \diam U_0$. With the trivial estimate $\diam U_0 \leq 1$ this leads to (we denote by $C_K$ a constant that depends only on $K$, but whose value may otherwise change from line to line)
	\begin{equation*}
	n \leq c \log\left( \frac{1}{d} \right) \leq C_K \log\left( \frac{1}{\overline{r}} \right) 
	\end{equation*}
	giving  
	\begin{equation*}
	\int \rho_{0}^{2}  dA(z) \leq C_K \log\left( \frac{1}{\overline{r}} \right) 
	\end{equation*}
	where $K$ is, as before, the quasiconformality constant inside $\D$. Thus 
	\begin{equation}\label{modulusylhaalta}
	M_{K_f}(\Gamma) \leq \alpha_f + C_K \log^2 \left( \frac{1}{\overline{r}} \right).
	\end{equation}
	This finishes the estimate from above for the weighted moduli. 
	
	\medskip
	In order to use the modulus inequality \eqref{modey} we must next estimate $M(f(\Gamma))$ from below. To this end we note that the $n$:th vertical part of the tunnel (counting from right), with left boundary having value $r_n$ on $x$-axis, has width $\tilde{C}r_n/\log^{\frac{1}{2}+\varepsilon}(1+\frac{1}{r_n})$, where $C_1<\tilde{C}<C_2$ for some fixed $C_1,C_2$. Let us call $R_n$ the  $n$:th vertical part the `turning area' excluded.  A standard application of Cauchy-Schwarz inequality (exactly the same reasoning as when computing the modulus of a rectangle) shows that for any admissible $\rho$  for the family $f(\Gamma)$  it holds that 
        \begin{equation}\label{admissible_r}
	\int_{R_n}\rho^2\gtrsim |R_n| r_n^{-2}  \log^{1+ 2\varepsilon}\left( 1+ 1/r_n \right)
	\end{equation}
We clearly have 	$ r_n^{-2}\log^{1+ 2\varepsilon}\left( 1+ 1/r_n \right)\approx |z|^{-2} \log^{1+ 2\varepsilon}\left( 1+ 1/|z| \right)$ for $z\in R_n$. We denote by $A_\alpha:=\{z\in\D\;:\; |\arg(z)|\leq\alpha/2\}$ the angular slice of $\D$ enveloping the tunnel, and get directly by construction of the tunnel
	\begin{eqnarray}\label{alamod}
	M(f(\Gamma))& \gtrsim &\int_{\cup_n R_n} |z|^{-2}\log^{1+ 2\varepsilon}\left(1+ 1/|z|\right) dA(z) \\
	& \geq & \int_{\{|z|\geq\overline{r}\}\cap A_\alpha}|z|^{-2}\log^{1+ 2\varepsilon}\left(1+ 1/|z|\right) dA(z) \nonumber\\
	& \gtrsim & C_\alpha\int_{\overline{r}< |z|<1}|z|^{-2}\log^{1+ 2\varepsilon}\left(1+ 1/|z|\right) dA(z)\nonumber \\
	& \gtrsim & C_{\alpha} \log^{2+2\varepsilon}\left( \frac{1}{\overline{r}} \right).\nonumber
	\end{eqnarray}

	Now combining the estimates for moduli with the modulus inequality \eqref{modey} we get a contradiction when $\overline{r} \to 0$, and hence there cannot be a mapping $f$ with exponentially integrable distortion for any $\varepsilon>0$. 
	
	\medskip
	As a final step we show that the domain $\Omega$, with any choice for $\varepsilon>0$, satisfies the 3-point condition with gauge function
	\begin{equation*}
	h(t)=\overline{C}t \log^{\frac{1}{2}+\varepsilon+\gamma}(\frac{1}{t})
	\end{equation*}
	for any $\gamma>0$. 
	
	This is trivial outside of the snake like tunnel. So, let us choose $z_0$ in snake with distance $r$ from the origin. From the construction of $\Omega$ we see that there exists fixed $C>0$ so that the intersection $$B\left(z_0,Cr/\log^{\frac{1}{2}+\varepsilon}\left(1+\frac{1}{r}\right)\right) \cap \partial \Omega$$ consists of exactly one continuum, for which 3-point condition holds trivially. Thus we need to check the condition only for points $w$ not in this ball. In this case we can clearly by the construction of the tunnel  assume that
	\begin{equation*}
	|z-w|=C_1r/\log^{\frac{1}{2}+\varepsilon}\left(1+\frac{1}{r}\right),
	\end{equation*}
	where $C_1>0$, and we need to show that 
	\begin{equation*}
	h(|z-w|)\geq C_2r,
	\end{equation*}
	since the diameter of the part of the boundary of the  tunnel connecting $z$ and $w$ is $C_2r$, where $C_2$ is controlled from above and below. But a direct computation gives 
	\begin{equation*}
	h(|z-w|) \geq \overline{C} C_1 r \log^{\gamma} \left( 1+\frac{1}{r} \right),
	\end{equation*}
	which is clearly larger than $C_2 r$,
	 when $\overline{C}$ is large enough. This finishes the proof of Theorem \ref{3point}. 

\smallskip	 
	 
	\noindent\textbf{Remark.}  It would be interesting to consider examples with spiralling tunnels; however our modulus estimates do not directly apply in such a situation.
	
\subsection{Proof of Theorem \ref{Teoreema2}}	
In this case we assume only $p$-integrable distortion from the extension. The proof follows closely the proof of Theorem \ref{3point} since we are using the same basic construction and we just concentrate on the modifications  needed. We use a similar domain that has a snake like tunnel as in the proof of Theorem \ref{3point}, but this time we fix $s\in (0,1)$ and choose the width of  the tunnel at distance $r$ to the origin to be $\approx r^{\frac{1}{s}}$. We will again use the modulus inequality  \eqref{modey}
 using  the same choice for $\Gamma$. That is, as before let $U$ and $F$ be the sides of the snake like slice ending at distance $\overline{r}$. Then $\Gamma$ is the family of paths connecting the sets $U_0: = f^{-1}\left( U\right)$ and $F_0: = f^{-1}\left( F \right)$. Dependency on the modulus inequality is the reason for the assumption that $p\geq 1$ in Theorem \ref{Teoreema2}. 
 
Again we start with estimating $M_{K_f}(\Gamma)$ from above but this time we must consider cases $p>1$ and $p=1$ separately. Let us first assume $p>1$ and  denote 
\begin{equation*}
d:= \dist (U_0,F_0)= \diam f^{-1}(E_{\overline{r}}),
\end{equation*}
define the sets $U_{0,i}$, $S_i$ as before and use the same admissible function $\rho_0$ as before by the formula
\eqref{rho_0}.
Recall also that $n$ is chosen as the smallest integer such that $2^n d \geq 1$. 

We estimate using the H\"older inequality 
\begin{equation*}
M_{K_f}(\Gamma)\leq \int K_f(z) \rho_{0}^{2}  dA(z) \leq \left( \int_{B(0,5)} K_{f}^{p}  dA(z) \right)^{\frac{1}{p}} \left( \int \rho_{0}^{\frac{2p}{p-1}}  dA(z)\right)^{\frac{p-1}{p}}.
\end{equation*}
Here the first integral is bounded from above by a constant $\alpha_{f,p}$ so we can concentrate on the second integral.
To this end note that
\begin{equation*}
\left( \int \rho_{0}^{\frac{2p}{p-1}}  dA(z)\right)^{\frac{p-1}{p}} \leq \left( \sum_{i=1}^{n} \int_{S_i} \left( \frac{1}{2^{i-1}d} \right)^{\frac{2p}{p-1}}  dA(z)\right)^{\frac{p-1}{p}}.
\end{equation*}
And as we have an upper bound $C \left(2^{i-1}d\right)^2$ for the area of $S_i$ we get an upper bound
\begin{equation*}
\left( C_p d^{\frac{-2}{p-1}} \sum_{i=1}^{n} \frac{1}{2^{\frac{2(i-1)}{p-1}}} \right)^{\frac{p-1}{p}}.
\end{equation*}
The sum 
\begin{equation*}
\sum_{i=1}^{\infty} \left( \frac{1}{2^{\frac{2}{p-1}}} \right)^{i-1}
\end{equation*}
is bounded for any $p>1$ with a bound that depends only on $p$, and thus we can continue the estimate by
\begin{equation*}\label{modylärajapint}
M_{K_f}(\Gamma) \leq C_{p,K_f} \left(\frac{1}{d} \right)^{\frac{2}{p}}.
\end{equation*}
From Lemma \ref{apulemma}, which trivially holds also in this case as nothing in the proof changes when we make tunnel thinner, we have 
\begin{equation*}
d \geq C_f \left( \mathrm{diam} \, E_{\overline{r}}  \right)^{\frac{1}{2-\varepsilon_\alpha}} \geq C \overline{r}^{\frac{1}{2-\varepsilon_\alpha}},
\end{equation*}
where we can choose $\varepsilon_{\alpha}$ as small as we wish by making the angle $\alpha$ small enough. And since width of the tunnel goes to zero in polynomial speed we have no difficulty in choosing $\alpha$ arbitrary small as we still have enough room for the snake to turn. By substituting this to the estimate we get
\begin{equation}\label{modylärajapint1}
M_{K_f}(\Gamma) \leq C_{p,K_f} \left( \frac{1}{\overline{r}} \right)^{\frac{1+ \varepsilon_{\alpha}}{p}}.
\end{equation}
In the case $p=1$ we use admissible function
\begin{equation*}
\rho_{0}(z) = 
\begin{cases}
\frac{1}{d} &\quad\text{if dist $( z,U_0 )<d,$}\\
0  &\quad\text{otherwise}.
\end{cases}
\end{equation*}
to get bound
\begin{equation}\label{modylärajapintKUNP=1}
M_{K_f}(\Gamma) \leq \int K_f(z) \rho_{0}^{2} (z)  dA(z) \leq \frac{1}{d^2} \int_{B(0,5)} K_f(z)  dA(z) \leq \frac{C_{K_f}}{d^2} \leq C_{K_f}\left(\frac{1}{\overline{r}} \right)^{1+ \varepsilon_{\alpha}}.
\end{equation}

As the next step we estimate the modulus $M(f(\Gamma))$ from below. Here the method is exactly same as in the proof of Theorem \ref{3point} but the width at the distance $r$ was chosen to be $\tilde{C}r^{\frac{1}{s}}$, which yields as before the  estimate of the modulus in vertical parts $R_n$ to be estimated  from below by the corresponding integral of the function $g(z)=\frac{C}{|z|^{\frac{2}{s}}}$. And as before the integral of $g$ over vertical parts is up to a constant same as the integral of $g$ over segment with angle $\alpha$, inner radius $\overline{r}$ and outer radius $1$. This time we obtain 
\begin{equation*}
M(f(\Gamma)) \geq C_{\alpha} \int_{B(0,1)\setminus B(0, \overline{r})} \frac{C}{|z|^{\frac{2}{s}}}  dA(z),
\end{equation*}
where the constant $C_{\alpha}$ can be arbitrary small for small $\alpha$, but does not depend on $\overline{r}$. Calculating the integral we get 
\begin{equation}\label{alamodkunpint}
M(f(\Gamma)) \geq C_{\alpha} \left( \frac{1}{\overline{r}} \right)^{\frac{2-2s}{s}}.
\end{equation}
Combining estimate \eqref{alamodkunpint} with \eqref{modylärajapint1} (or with \eqref{modylärajapintKUNP=1} if $p=1$) and the modulus inequality \eqref{modey} we get the desired bound 
\begin{equation*}
p \leq \frac{s}{2(1-s)}+ \varepsilon_{\alpha},
\end{equation*}
where $\varepsilon_\alpha$ is not strictly same as before but still $\varepsilon_\alpha \to 0$ when $\alpha \to 0$, when we let $\overline{r} \to 0$. 

The proof of Theorem \ref{Teoreema2} is finished once we note that for any fixed $s\in (0,1)$ the domain $\Omega$ satisfies 3-point condition with control function $h(t)=t^{s}$. This is easy to see with similar consideration as before since the width of the snake at the distance $r$ is comparable to $ r^{\frac{1}{s}}$.

\section{Modulus of continuity for the inverse}
\label{se:modulus}

In this section we continue studying the boundary behaviour in the form of lower bounds for the modulus of continuity proving Theorem \ref{LowerModCont} and its Corollary \ref{co:rotation}. Moreover, we will improve on Theorem \ref{LowerModCont} and describe a method that can establish lower bounds under more relaxed assumptions for the distortion. To this end, we start by building more general machinery for modulus estimates. 

\subsection{General estimates for modulus of rings}
First we couple the weighted modulus in an annulus $A$ with integrability of the distortion by defining the \emph{conditional modulus} $M_{\psi,I}(\Gamma)$ as follows:

\medskip
\framebox{$M_{\psi,I}(\Gamma)$ is the supremum of $M_K(\Gamma)$ under the condition that $\int_A\psi(K)\leq I$,}

\medskip
and write a  counterpart of the classical modulus of ring estimate using $M_{\psi,I}(\Gamma)$. 
\begin{lemma}\label{le:1}
	Let  $0\leq m<n$  be integers and denote by $\Gamma$  the family of curves in the annulus $A:=\{e^{-n}<|x|<e^{-m}\}$ joining the inner and outer boundary. Let $\psi : [1,\infty)\to [0,\infty)$ be a strictly increasing and convex homeomorphism. Then, writing $I_0=I/2\pi$,
	$$
	\inf_{\sum_{j=m+1}^{n}b_j=1} \sum_{j=m+1}^{n}\big(\psi^{-1}\big(I_0b_je^{2(j+1)}\big)\big)^{-1}\;\leq\;2\pi \big(M_{\psi,I}(\Gamma)\big)^{-1}\;\leq \;\inf_{\sum_{j=m+1}^{n}b_j=1} \sum_{j=m+1}^{n}\big(\psi^{-1}\big(I_0b_je^{2j}\big)\big)^{-1},
	$$
	where the numbers $b_j$ are assumed to be strictly positive. The difference between the left and right hand estimates is at most 1.
	
\end{lemma}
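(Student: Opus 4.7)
The strategy is to decompose $A$ into the $n-m$ sub-annuli of unit logarithmic width $A_j := \{e^{-(j+1)} < |z| < e^{-j}\}$, chosen so that on each $A_j$ one has $|z|^{-2} \in [e^{2j}, e^{2(j+1)}]$. The two exponents in the statement arise precisely as the sharp lower and upper bounds of $|z|^{-2}$ on $A_j$. The overall structure is a two-sided estimate: the left-hand inequality is an upper bound on $M_{\psi,I}(\Gamma)$ obtained via an admissible test function and Jensen's inequality, and the right-hand inequality is a lower bound obtained by exhibiting a concrete extremal weight.

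For the left-hand inequality, given any $K$ with $\int_A\psi(K)\,dA \le I$, I would take the admissible density $\rho(z) = c_j/|z|$ on $A_j$ with $\sum_j c_j = 1$; admissibility holds since every curve in $\Gamma$ crosses each $A_j$ and satisfies $\int_{\gamma\cap A_j}|dz|/|z|\ge 1$. Splitting $\int_A K\rho^2\,dA$ into annular pieces and using the probability measure $d\mu_j(z) := (2\pi|z|^2)^{-1}\,dA(z)$ on $A_j$, Jensen's inequality applied to the convex $\psi$ together with the pointwise bound $|z|^{-2}\le e^{2(j+1)}$ yields $\int_{A_j}K|z|^{-2}\,dA \le 2\pi\,\psi^{-1}(I_0 b_j e^{2(j+1)})$, where $b_j := I^{-1}\int_{A_j}\psi(K)\,dA$ satisfies $\sum_j b_j\le 1$. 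A Cauchy--Schwarz optimization over the $c_j$ then gives $M_K(\Gamma) \le 2\pi/\sum_j[\psi^{-1}(I_0 b_j e^{2(j+1)})]^{-1}$, and taking the supremum over feasible $K$ reduces by monotonicity to an infimum over $b_j$ with $\sum b_j = 1$, giving the left-hand bound.

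For the right-hand inequality, I would exhibit an extremal weight explicitly. Take $K^*$ to be radial and piecewise constant, $K^*|_{A_j} = K_j := \psi^{-1}(I_0 b_j e^{2j})$, with $\sum_j b_j = 1$. Using $|A_j| = \pi e^{-2j}(1-e^{-2})$, the constraint evaluates to $\sum_j \psi(K_j)|A_j| = \pi I_0(1-e^{-2}) = I(1-e^{-2})/2 \le I$, so $K^*$ is feasible. Since each $A_j$ has logarithmic width $1$, the weighted modulus can be computed exactly by Lagrange (or equivalently by equality in the series law for radial piecewise-constant weights), giving $M_{K^*}(\Gamma) = 2\pi/\sum_j (1/K_j) = 2\pi/\sum_j[\psi^{-1}(I_0 b_j e^{2j})]^{-1}$. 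Maximizing over $b_j$ with $\sum b_j = 1$ produces the lower bound $M_{\psi,I}(\Gamma) \ge 2\pi/\inf_{\sum b_j = 1}\sum_j[\psi^{-1}(I_0 b_j e^{2j})]^{-1}$.

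The gap of at most $1$ between the two infima follows from an index shift: substituting $j \mapsto j-1$ turns the sum with exponent $e^{2(j+1)}$ into the sum with exponent $e^{2j}$ but with range shifted to $\{m+2,\ldots,n+1\}$, so the two infima differ by at most one boundary term; since $\psi^{-1}(\cdot)\ge \psi^{-1}(0) = 1$, each such term is bounded by $1$. The main technical subtlety is the bookkeeping of the constant $1-e^{-2}$ coming from $|A_j|$ versus the ratio $e^2 = \sup_{A_j}|z|^{-2}/\inf_{A_j}|z|^{-2}$; this factor $e^2$ separating the two exponents in the statement is exactly what allows the Jensen upper bound and the piecewise-constant lower bound to close with matching formulas.
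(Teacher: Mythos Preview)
Your proposal is correct and follows essentially the same strategy as the paper: decompose into unit-width sub-annuli, use Jensen's inequality for the upper bound on $M_{\psi,I}$, exhibit a piecewise-constant radial weight for the lower bound, and handle the gap via an index shift. One minor presentational difference is that the paper first reduces to radial $K$ by rotational averaging (using convexity of $\psi$ for the constraint and of the $L^2$-norm for the modulus), and then carries out the whole analysis in one variable with the probability measure $du/u$ and the convex function $y\mapsto\psi(1/y)$; you instead apply Jensen directly to $\psi$ against the two-dimensional probability measure $(2\pi|z|^2)^{-1}dA$ on each $A_j$, which bypasses the symmetrization step for the upper bound. Both routes arrive at the same discrete extremal problem and the same two-sided estimate.
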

\begin{proof} We start by considering first a given radially symmetric distortion $K=K(|z|)\geq 1$ on the annulus $A$. All rotations of any admissible $\rho$ are then also admissible, and therefore so are their rotational averages. By convexity of the weighted $L^2$-norm, the rotational average has smaller weighted norm than the original admissible function, and hence the function $\rho$ giving minimum for the modulus is also radially symmetric, $\rho=\rho(|z|)$. Thus we look for
	\begin{equation}\label{eq:min}
	\min\;\; 2\pi\int_{e^{-n}}^{e^{-m}}\rho(u)^2K(u)udu \qquad\textrm{under the constraint}\qquad \int_{e^{-n}}^{e^{-m}}\rho(u)= 1
	\end{equation}
	(if a rotationally symmetric $\rho$ is admissible for the radial curves, it is admissible for all).  By Cauchy-Schwarz inequality 
	$$
	\Big(\int_{e^{-n}}^{e^{-m}}\rho(u)^2K(u)udu\Big)\Big( \int_{e^{-n}}^{e^{-m}}\frac{du}{K(u)u}\Big)\geq \Big(\int_{e^{-n}}^{e^{-m}}\rho(u)\Big)^2= 1,
	$$
	whence the minimum in \eqref{eq:min} equals
	$ =2\pi\Big( \int_{e^{-n}}^{e^{-m}}\frac{du}{K(u)u}\Big)^{-1}$.
	
	Recall that our  goal is to look for the largest modulus among distortions $K$ that satisfy the bound $\int_A\psi(K)\leq I$. We first treat the radially symmetric $K$, where 
	we thus need  to investigate the extremal problem
	\begin{equation}\label{eq:min3}
	\inf \int_{e^{-n}}^{e^{-m}}\frac{du}{K(u)u} \qquad\textrm{under the constraint}\qquad \int_{e^{-n}}^{e^{-m}}u\psi(K(u))du\leq I_0.
	\end{equation}
	Let us denote $a_j=\int_{e^{-j-1}}^{e^{-j}}\frac{du}{K(u)u}$, and note that \eqref{eq:min3} is essentially equivalent to
	\begin{equation}\label{eq:min4}
	\inf \sum_{j=m+1}^{n}a_j \qquad\textrm{under the constraint}\qquad \sum_{j=m+1}^{n}e^{-2j}\psi(1/a_j)\leq I_0.
	\end{equation}
	More precisely: the constraint in  \eqref{eq:min4} implies the original constraint, and is implied by the original if $I_0$ is replaced by $e^2I_0$.
	To see this, first of all by direct calculation we see that
	$$
	e^{-2}e^{-2j}\int_{e^{-j-1}}^{e^{-j}}\frac{\psi(K(u))du}{u}\leq\int_{e^{-j-1}}^{e^{-j}}u\psi(K(u))du \leq e^{-2j}\int_{e^{-j-1}}^{e^{-j}}\frac{\psi(K(u))du}{u}.
	$$
	Secondly, the  map $y\mapsto \psi(1/y)$ is also convex since $\psi$ is increasing and convex, and we may apply Jensen's inequality on the function $y \to \psi(1/y)$ and the probability measure $du/u$ on the interval $(e^{-j-1}, e^{-j})$   to deduce that
	$$
	\int_{e^{-j-1}}^{e^{-j}}\frac{\psi(K(u))du}{u}\geq \psi(1/a_j),
	$$
	where equality can be attained by  choosing $K$ constant on the interval $(e^{-j-1},e^{-j})$ while preserving the value of $a_j$.
	
	We denote $b_j:=  e^{-2j}\psi(1/a_j)/I_0 $ so that $b_j$'s sum to at most 1, and we note that $a_j= \big(\psi^{-1}(I_0e^{2j}b_j)\big)^{-1}$. The claim on the difference between the upper and lower estimates follows by noting that since in any case the $a_j$:s are bounded from above by 1, multiplying $I$ by the constant $e^2$ produces only a bounded error. 
	Namely, let $f:(0,\infty)\to(0,A]$ be monotonic. If $I_0$ is replaced by $I_0'$, the value of sum of the form 
	$\sum_{n=a}^b f(I_0e^{2n})$ can change at most by $A\lceil\frac{1}{2}|\log (I_0/I'_0)|\rceil$ in general. In our application we may take $A=1$,  which will then produce the universal error term 1.

	Up to now we have assumed radial symmetry on $K$. If $K$ is not radially symmetric, let $\widetilde K$ be the rotational average
	$ \widetilde K(z):=\frac{1}{2\pi}\int_0^{2\pi}K(|z|e^{i\theta})$ and note that $\int_A\psi(\widetilde K)\leq  \int_A \psi(K) \leq I$. Above we noted that the minimal $\rho$ giving the modulus $M_{ \widetilde K(z)}(\Gamma)$ is rotationally symmetric. Then note, that for any rotationally symmetric $\rho$ we have $\int_A\widetilde K\rho^2=\int_AK\rho^2$, which shows that the weighted moduli satisfy $M_{K}(\Gamma) \leq M_{\widetilde K}(\Gamma)$ finishing the proof.
	
\end{proof}
One of the most important properties of modulus of path families is that the modulus over annulus $A:=\{e^{-n}<|x|<e^{-m}\}$ converges to zero when $n \to \infty$ for fixed $m$. As a corollary to Lemma \ref{le:1} we get the exact range of control functions $\psi$ for which this holds. Similar results have been discovered earlier, see for example \cite{KKMOZ, KoOn}.

\begin{corollary}\label{cor:1} 
	In the situation of the previous lemma,  for fixed $m$ and $I$, we have $$\lim_{n\to\infty}M_{\psi,I}(\Gamma)=0$$ if and only if
	\begin{equation}\label{eq:ehto1}
	\sum_{j=1}^{\infty}\big(\psi^{-1}\big(e^{2j}\big)\big)^{-1}=\infty,
	\end{equation}
	or equivalently
	\begin{equation}\label{eq:ehto2}
	\int_2^\infty \frac{\log(\psi(x))}{x^2}dx=\infty.
	\end{equation}
\end{corollary}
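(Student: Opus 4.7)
The proof will reduce the claim to a convex optimisation problem via Lemma \ref{le:1}, analyse when the infimum diverges by a monotonicity argument plus a geometric test sequence, and then convert \eqref{eq:ehto1} into \eqref{eq:ehto2} by an integration-by-parts identity. Throughout I write $S_n$ for the infimum in Lemma \ref{le:1}, namely
$$
S_n:=\inf_{b_j>0,\,\sum_{j=m+1}^n b_j=1}\sum_{j=m+1}^n\frac{1}{\psi^{-1}(I_0 b_j e^{2j})}.
$$
By Lemma \ref{le:1}, $M_{\psi,I}(\Gamma)\to 0$ as $n\to\infty$ if and only if $S_n\to\infty$, since the upper and lower bounds given there differ only by a harmless shift of the index $j$, which amounts to replacing $I_0$ by $e^{\pm 2}I_0$.

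The easy half of the main equivalence (that \eqref{eq:ehto1} forces $S_n\to\infty$) is immediate from monotonicity: for any admissible weights, $b_j\le 1$ yields $I_0 b_j e^{2j}\le I_0 e^{2j}$, and since $\psi^{-1}$ is increasing,
$$
\sum_{j=m+1}^n\frac{1}{\psi^{-1}(I_0 b_j e^{2j})}\ge \sum_{j=m+1}^n\frac{1}{\psi^{-1}(I_0 e^{2j})}.
$$
Because multiplying the argument by the fixed constant $I_0$ only shifts the summation index by a bounded amount, \eqref{eq:ehto1} implies divergence of the right hand side, hence $S_n\to\infty$.

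For the converse, suppose \eqref{eq:ehto1} fails. I would exhibit an explicit competitor: fix any $r\in(e^{-2},1)$ and set $b_j:=c r^j$ with $c=c(n,m,r)$ chosen so that $\sum_{j=m+1}^n b_j=1$. As $n\to\infty$ the constant $c$ tends to $(1-r)/r^{m+1}$, so $c$ is bounded above and away from zero. With $A:=re^2>1$ one has $I_0 b_j e^{2j}=I_0 c\,A^j$, and hence
$$
\sum_{j=m+1}^n \frac{1}{\psi^{-1}(I_0 b_j e^{2j})}\le \sum_{j=m+1}^{\infty}\frac{1}{\psi^{-1}(I_0 c\,A^j)}.
$$
Since $\psi^{-1}$ is monotone, a standard integral comparison shows the right hand side is finite iff $\sum_j 1/\psi^{-1}(e^{2j})<\infty$, which we are assuming. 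Thus $S_n$ stays bounded, proving the first equivalence.

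Finally, to identify \eqref{eq:ehto1} with \eqref{eq:ehto2}, apply the integral test to the monotone function $t\mapsto 1/\psi^{-1}(e^{2t})$ and make the substitution $x=\psi^{-1}(e^{2t})$, so $t=\tfrac12\log\psi(x)$ and $dt=\psi'(x)/(2\psi(x))\,dx$. This gives
$$
\sum_{j\ge 1}\frac{1}{\psi^{-1}(e^{2j})}\;\asymp\;\int_{1}^{\infty}\frac{dt}{\psi^{-1}(e^{2t})}
\;=\;\frac{1}{2}\int_{x_0}^{\infty}\frac{\psi'(x)}{x\psi(x)}\,dx,
$$
with $x_0=\psi^{-1}(e^2)$. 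Integration by parts yields
$$
\int_{x_0}^{\infty}\frac{\psi'(x)}{x\psi(x)}\,dx=\Big[\frac{\log\psi(x)}{x}\Big]_{x_0}^{\infty}+\int_{x_0}^{\infty}\frac{\log\psi(x)}{x^2}\,dx,
$$
so convergence of \eqref{eq:ehto1} is equivalent to convergence of \eqref{eq:ehto2} provided the boundary term at infinity is finite. The main obstacle is handling this boundary term and the regularity needed for the change of variables; both are resolved by noting that $\psi$ is convex (hence absolutely continuous), and that if $\log\psi(x)/x\not\to 0$ then $\psi(x)\ge e^{cx}$ eventually, which forces both \eqref{eq:ehto1} and \eqref{eq:ehto2} to diverge directly. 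This completes the chain of equivalences.
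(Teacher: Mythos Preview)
Your proof is correct and follows essentially the same route as the paper: both directions of the first equivalence come from Lemma~\ref{le:1} via the trivial bound $b_j\le 1$ and a geometric test sequence (the paper takes $b_j=e^{-j}$, you take $b_j=cr^j$), and the equivalence of \eqref{eq:ehto1} with \eqref{eq:ehto2} is obtained in both cases by the integral test, the substitution $x=\psi^{-1}(e^{2t})$, and integration by parts. One small imprecision: your claim that ``$\log\psi(x)/x\not\to 0$ forces $\psi(x)\ge e^{cx}$ eventually'' is not literally true for convex $\psi$ (the ratio can oscillate with $\liminf=0$), but the needed conclusion still holds --- the paper simply observes that unboundedness of $\log\psi(x)/x$ together with monotonicity of $\log\psi$ already yields $\int \log\psi(x)/x^2\,dx=\infty$, which is all that is required to dispose of the boundary term.
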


\begin{proof}
	Let us first note that by choosing $b_j=e^{-j}$ on the right hand side of Lemma \ref{le:1}  and replacing each $b_j$ by 1 on the left hand side , we obtain the crude estimate
	$$
	\sum_{j=m}^{n}\big(\psi^{-1}\big(I_0e^{2(j+1)}\big)\big)^{-1}\;\leq \;2\pi \big(M_{\psi,I}(\Gamma)\big)^{-1 }\leq  \sum_{j=m}^{n}\big(\psi^{-1}\big(I_0e^{j}\big)\big)^{-1}.
	$$
	Clearly  monotonicity of $\psi^{-1}$  implies that here both upper and lower estimates tend to $\infty$  simultaneously when $n \to \infty$, and this happens exactly when \eqref{eq:ehto1} holds. 
	
	Next, in order to verify
	the equivalence of the conditions, note that by approximation we may assume  that $\psi$ is differentiable. By monotonicity \eqref{eq:ehto1} is equivalent to 
	$$
	\infty = \int_{\frac{1}{2}\log 2}^\infty (\psi^{-1}(e^{2t}))^{-1}dt=\frac{1}{2}\int_2^\infty \frac{dt}{t\psi^{-1}(t)}= c+\frac{1}{2}\int_2^\infty \frac{\psi'(x)}{x\psi(x)}.
	$$
	The last written integral  over the interval $[2,M]$ equals
	$$
	\Big/^M_2\frac{\log(\psi(x))}{x}+\int_2^M{\log(\psi(x))}\frac{dx}{x^2},
	$$
	which yields the desired conclusion since if  $\frac{\log(\psi(x))}{x}$ is unbounded as $x\to\infty$ we easily see that the last written integral  above diverges.
\end{proof}

The previous result shows that we can use the modulus to estimate e.g. continuity essentially only in the case where $\int_2^\infty \frac{\log(\psi(x))}{x^2}dx=\infty,$ which is naturally to be expected since this is well known be  the  borderline for cavitation in the theory of mapping of finite distortions, see \cite[Chapter 20.3]{AIM}.  For example, mappings of $L^p$-integrable distortion with $p<\infty$ do not work. Nontrivial estimate are still obtained e.g. in the cases 
$$
\psi(x)=\exp \Big(\frac{x}{\log(e+x)}\Big) -c_0  \qquad\textrm{or}\qquad  \psi(x)=\exp \Big(\frac{x}{\log(e+x)\log\log(e^2+x)}\big)-c_1. 
$$

We next state our main result concerning modulus estimates, which lets us estimate rather accurately the modulus of annuli for a large class of $\psi$'s which, roughly speaking, grow slower than double exponential: the condition of the following result is satisfied e.g. if $\psi(x)\leq \exp(\exp(x/\log^{1+\varepsilon} x)).$
\begin{theorem}\label{le:3}
	In the situation of {\rm Lemma \ref{le:1}} denote  $g(x)=\log(\psi(x)).$  Assume that 
	$$
	\displaystyle \int_{\psi^{-1}(e^2)}^\infty \frac{\log\log \psi (x)}{x^2}dx<\infty.
	$$
	Then  we may write for a general  annulus $A=\{r<|x|<R\}$ with $r<R\leq 1$
	$$
	2\pi\big(M_{\psi,I}(\Gamma)\big)^{-1}=o_{\psi,I_0}(1) +\frac{1}{2}\int_{g^{-1}(2\log(R^{-1}))}^{g^{-1}(2\log(r^{-1}))}\frac{g'(x)dx}{x}
	$$
	(where $\frac{d}{dx}[\log(\psi(x))]dx=d(\log(\psi(x)))$ if $\psi$ is not differentiable). The error term  satisfies $o_{\psi,I_0}(1)\to 0$ as $R\to 0$  for
	fixed $I$ and $\psi$.
\end{theorem}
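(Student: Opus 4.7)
The plan is to start from the two-sided estimate of Lemma \ref{le:1} and reduce the computation of $M_{\psi,I}(\Gamma)$ to a constrained optimization problem. Set $m:=\lceil\log(1/R)\rceil$ and $n:=\lfloor\log(1/r)\rfloor$; then the annulus $A$ differs from $\{e^{-n}<|z|<e^{-m}\}$ only by at most two shells of bounded conformal modulus, so up to a bounded additive error
$$
2\pi\bigl(M_{\psi,I}(\Gamma)\bigr)^{-1}=\inf_{\sum b_j=1,\,b_j>0}\sum_{j=m+1}^{n}\bigl(\psi^{-1}(I_0 b_j e^{2j})\bigr)^{-1}.
$$
Changing variables to $x_j:=\psi^{-1}(I_0 b_j e^{2j})$ turns this into a convex problem: minimize $\sum 1/x_j$ under the constraint $\sum e^{-2j}\psi(x_j)=I_0$. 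The Lagrange equations read $x_j^2\psi'(x_j)=c\,e^{2j}$ for some constant $c=c(\psi,I_0,m,n)>0$, which, using $\psi'=g'\psi$ with $g=\log\psi$, takes the implicit form
$$
2j=g(x_j)+\log g'(x_j)+2\log x_j-\log c.
$$

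I would then treat $j$ as a continuous variable $u$ and differentiate this relation to obtain
$$
du=\tfrac{1}{2}\Bigl(g'(x)+\tfrac{g''(x)}{g'(x)}+\tfrac{2}{x}\Bigr)dx.
$$
The continuous relaxation of the optimal sum equals, after the change of variable $u\leftrightarrow x$,
$$
\int_m^n\frac{du}{x(u)}=\tfrac{1}{2}\int_{x_m}^{x_n}\!\frac{g'(x)}{x}dx+\tfrac{1}{2}\int_{x_m}^{x_n}\!\frac{g''(x)}{x\,g'(x)}dx+\int_{x_m}^{x_n}\!\frac{dx}{x^2}.
$$
The first term is the main one, with $x_m,x_n$ agreeing with $g^{-1}(2\log(1/R))$ and $g^{-1}(2\log(1/r))$ up to the corrections $\log g'(x)+2\log x-\log c$ in the argument of $g^{-1}$, which I will absorb into the error terms.

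Finally I need to show that the second and third integrals, together with the discrete-continuous gap, are all $o_{\psi,I_0}(1)$ as $R\to 0$. The third is bounded by $1/x_m$, which tends to $0$ since the Lagrange relation forces $x_m\to\infty$ when $m\to\infty$. For the second, integration by parts gives
$$
\int_{x_m}^{x_n}\frac{g''(x)}{xg'(x)}dx=\Bigl[\tfrac{\log g'(x)}{x}\Bigr]_{x_m}^{x_n}+\int_{x_m}^{x_n}\frac{\log g'(x)}{x^2}dx,
$$
and by convexity of $g$ one has $g'(x)\leq g(2x)/x$, so $\log g'(x)\leq\log g(2x)+O(\log x)$. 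The assumption $\int\log\log\psi(x)\,x^{-2}dx<\infty$, equivalent to $\int\log g(x)\,x^{-2}dx<\infty$, then ensures absolute convergence of this tail and $o(1)$ behaviour as $x_m\to\infty$. The discrete-continuous gap is treated by standard Riemann sum comparison for the monotone integrand $1/x(u)$, contributing only $O(1/x_{m+1})$. The main obstacle is keeping the errors uniform in $r$ even when $r\ll R$; this is precisely what absolute convergence of $\int\log g(x)\,x^{-2}dx$ buys, since the error contributions beyond $x_m$ vanish uniformly regardless of how far $x_n$ extends.
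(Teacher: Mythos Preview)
Your route is genuinely different from the paper's. You try to solve the constrained optimization exactly via Lagrange multipliers, then pass to a continuous limit and split the resulting integral into a main term plus error terms. The paper never optimizes: it sandwiches $2\pi(M_{\psi,I}(\Gamma))^{-1}$ between two explicit, non-optimal choices---$b_j=1/(2j^2)$ for the upper bound and $b_j=1$ for the lower---and shows directly via a Fubini argument that the gap
\[
\sum_j\Bigl[\bigl(g^{-1}(2j-2\log j+\log I_0-\log 2)\bigr)^{-1}-\bigl(g^{-1}(2j+2+\log I_0)\bigr)^{-1}\Bigr]
\]
is controlled by $\int \log g(x)\,x^{-2}\,dx$. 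This is simpler and uses only the stated hypothesis.

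Your argument has two substantive gaps. First, the step ``by convexity of $g$ one has $g'(x)\leq g(2x)/x$'' is not justified: only $\psi$ is assumed convex, and $g=\log\psi$ need not be convex (for instance, $\psi(x)=x^2$ gives $g(x)=2\log x$, which is concave). What convexity of $\psi$ does give is $g'(x)=\psi'(x)/\psi(x)\leq \psi(2x)/(x\psi(x))=e^{g(2x)-g(x)}/x$, hence $\log g'(x)\leq g(2x)-g(x)-\log x$, and the tail integral of $(g(2x)-g(x))/x^2$ is comparable to $\int g(x)\,x^{-2}dx$, not to $\int \log g(x)\,x^{-2}dx$. So your control of $\int g''/(xg')\,dx$ via the hypothesis is not established.

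Second, the Lagrange constant $c=c(\psi,I_0,m,n)$ depends on both endpoints. All the implicitly defined $x_j$, in particular $x_m$ and $x_n$, carry this dependence, and you need $|g(x_m)-2m|$ and $|g(x_n)-2n|$ to produce $o(1)$ corrections in the main integral $\tfrac12\int g'(x)/x\,dx$ \emph{uniformly in $n$}. Saying you ``will absorb'' the corrections $\log g'(x)+2\log x-\log c$ does not address this: you have no a priori bound on $\log c$, and as $n\to\infty$ the constraint forces $c$ to drift, shifting every $x_j$. The paper's approach avoids both issues precisely because its test weights $b_j$ are explicit and independent of $n$.
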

\begin{proof} 
	Let us first consider the case  $r=e^{-n} <R=e^{-m}$ as in Lemma \ref{le:1}. We  obtain an upper bound for $1/M_{\psi,I}(\Gamma)$ by choosing $b_j =1/2j^2$  in the right hand side of Lemma \ref{le:1} since  $\psi^{-1}$ is increasing, and because  $\sum_{j=1}^{n-m} \frac{1}{2}j^{-2}\leq 1$. A lower bound for $1/M_{\psi,I}(\Gamma)$ is again obtained by replacing  $b_j=1$ in the left hand side of Lemma \ref{le:1}.  The difference between the upper and lower bound  is less than  (below we denote $\lfloor \log I_0\rfloor=a$)
	\begin{eqnarray*}\label{eq:10}
		S&:=& \sum_{j=m}^\infty \Big(\big(\psi^{-1}(I_0e^{2j}/2j^2)\big)^{-1} -\big(\psi^{-1}(I_0e^{2j+2})\big)^{-1} \Big)\\ &=&
		\sum_{j=m}^\infty \Big(\big(g^{-1}(\log I_0+2(j-\log j)-\log 2)\big)^{-1} -\big(g^{-1}(\log I_0 +2(j+1))\big)^{-1} \Big)\\
		\\&\leq&\sum_{j=2m+a}^\infty \Big(\big(g^{-1}((j-2\log (j-a))\big)^{-1} -\big(g^{-1}(j+2)\big)^{-1} \Big)\\
		&\leqq& \int_{2m+a}^\infty\Big(\big(g^{-1}(x-2\log (x-a))\big)^{-1}-\big(g^{-1}(x+3)\big)^{-1}\Big)dx.
	\end{eqnarray*}
	We  assume  momentarily that $\psi$ is differentiable. Denote $h=: -(1/g^{-1})'$. The last written integral equals
	\begin{eqnarray*}\label{eq:10a}
		\int_{2m+a}^\infty \bigg(\int_{x-2\log (x-a) }^{x+3} h(u)du\bigg)dx\; & \leq & \; \int_{2m+a-2\log(2m)}^\infty h(u) \big|\{x\;:\; x-2\log (x-a)\leq u\leq x+3\}\big|du\\
		&\leq & 3\int_{2m+a-2\log(2m)}^\infty h(u)(1+\log (u-a))du\\
		&= &3\int_{g^{-1}(2m-2\log(2m)+a)}^{\infty}\frac{1+\log (g(u)-a)}{u^2}\; =  E_0(m,I_0)\\
	\end{eqnarray*}
	where we applied Fubini and the substitution $u=g(x)$. By the assumptions this quantity is finite and obviously $ E_0(m,I_0)\to 0$ as $m\to\infty$ for any fixed $I_0.$ Finally, we may dispense with the assumption that $g$ is differentiable by simple approximation.
	
	The above argument shows that $2\pi\big(M_{\psi,I}(\Gamma)\big)^{-1}$ differs from the sum  $\sum_{j=m}^\infty\big(\psi^{-1}(I_0e^{2j+2})\big)^{-1}$ at most by the quantity $E_0$ defined above. As $x \mapsto \big(\psi^{-1}(I_0e^{2x})\big)^{-1}$ is decreasing, it follows that
	
\[
\int_{m}^{n+1} \big(\psi^{-1}(I_0e^{2x})\big)^{-1} dx \leq \sum_{j=m}^n\big(\psi^{-1}(I_0e^{2j+2})\big)^{-1} \leq \int_{m-1}^n\big(\psi^{-1}(I_0e^{2x})\big)^{-1} dx.
\]
By using the fact that $\psi^{-1}$ is increasing and bounded by 1 from above,  the difference between the sum and the sum where we replace $I_0$ by 1 decreases to zero uniformly in $n$   as $m$ tends to infinity. The same holds true for the difference of the RHS and LHS above. Hence, 
	together with the  change of variables  $x\mapsto\frac{1}{2}g(x)$ this yields 
	$$
	2\pi\big(M_{\psi,I}(\Gamma)\big)^{-1} =o_{\psi,I_0}(1) + \frac{1}{2}\int_{g^{-1}(2\log(R^{-1}))}^{g^{-1}(2\log(r^{-1}))}\frac{g'(x)dx}{x}
	$$
	where $o_{\psi,I_0}(1)\to 0$ as $R\to 0.$
\end{proof}

Let us then use Theorem \ref{le:3} to write down explicitly the modulus of the path family connecting inner and outer boundary of annuli $A=\{r<|x|<R\}$  in some simple cases: 

\begin{corollary}\label{cor:2} 
	(i)\quad Assume that $\psi(x)=e^{px^\alpha}$ with $\alpha\geq 1$. Then, if $\alpha=1$ we have
	\begin{equation}\label{eq:101} 
	M_{\psi,I}(\Gamma) = 2\pi\Big(o_{\psi,I_0}(1)+\frac{p}{2}\Big(\log\log (r^{-1})-\log\log(R^{-1})\Big)\Big)^{-1}
	\end{equation}
	and for $\alpha >1$
	\begin{equation}\label{eq:102}
	M_{\psi,I}(\Gamma) = 2\pi\Big(o_{\psi,I_0,\alpha}(1)+\big(\frac{p}{2}\big)^{1/\alpha}\frac{\alpha}{\alpha-1}\Big((\log^{(\alpha-1)/\alpha} (r^{-1})-\log^{(\alpha-1)/\alpha} (R^{-1})\Big)\Big)^{-1}.
	\end{equation}
	\noindent (ii)\quad Assume that $\psi(x)=e^{px/\log^\beta(x)}$ with $\beta\in (0,1]$ and $p>0.$ Then, if $\beta=1$ we have
	\begin{equation}\label{eq:101a}
	M_{\psi,I}(\Gamma) = 2\pi\Big(o_{\psi,I_0, p}(1)+\frac{p}{2}\Big(\log\log\log (r^{-1})-\log\log\log(R^{-1})\Big)\Big)^{-1}
	\end{equation}
	and for $\beta\in (0,1)$
	\begin{equation}\label{eq:102a}
	M_{\psi,I}(\Gamma) = 2\pi\Big(o_{\psi,I_0,\beta. p}(1)+\frac{p}{2(1-\beta)}\Big(\big(\log\log(1/r)\big)^{1-\beta}-\big(\log\log(1/R)\big)^{1-\beta}\Big)\Big)^{-1}.
	\end{equation}
\end{corollary}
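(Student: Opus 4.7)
The plan is to invoke Theorem~\ref{le:3} in each case and then evaluate the resulting integral
$$\mathcal{I}(r,R):=\int_{g^{-1}(2\log(R^{-1}))}^{g^{-1}(2\log(r^{-1}))}\frac{g'(x)\,dx}{x}=\int_{2\log(R^{-1})}^{2\log(r^{-1})}\frac{du}{g^{-1}(u)}$$
(after the substitution $u=g(x)$), where $g=\log\psi$. In all cases the integrability hypothesis $\int^\infty \log\log\psi(x)/x^2\,dx<\infty$ is immediate, since $\log\log\psi(x)$ grows only like $\log x$ in (i) and even more slowly in (ii), and $(\log x)/x^2$ is integrable at infinity. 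It then remains to compute $\mathcal{I}(r,R)$ up to a term that vanishes as $R\to 0$.

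For part (i) the inverse $g^{-1}(u)=(u/p)^{1/\alpha}$ is explicit. When $\alpha=1$ the integral equals $p\bigl(\log\log(r^{-1})-\log\log(R^{-1})\bigr)$, and multiplying by $\tfrac12$ gives \eqref{eq:101}. When $\alpha>1$ the primitive of the integrand $p^{1/\alpha}u^{-1/\alpha}$ is $\frac{p^{1/\alpha}\alpha}{\alpha-1}u^{(\alpha-1)/\alpha}$; extracting the factor $2^{(\alpha-1)/\alpha}$ from $u=2\log(\cdot)$ and multiplying by $\tfrac12$ regroups the constants into $(p/2)^{1/\alpha}\frac{\alpha}{\alpha-1}$, producing \eqref{eq:102}.

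The main obstacle is part (ii), since $g(x)=px/\log^\beta x$ has no closed-form inverse. The idea is to show
$$\frac{1}{g^{-1}(u)}=\frac{p}{u\log^\beta(u)}+E(u),\qquad \int^\infty |E(u)|\,du<\infty,$$
so that the tail of the $E$-integral can be absorbed into the error term of Theorem~\ref{le:3}. To establish this I would start from the defining relation $x=(u/p)\log^\beta x$ for $x=g^{-1}(u)$ and bootstrap: iterating $\log x=\log u-\log p+\beta\log\log x$ forces $\log x=\log u+O(\log\log u)$, hence $g^{-1}(u)=(u/p)(\log u)^\beta\bigl(1+O(\log\log u/\log u)\bigr)$. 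The induced error in the reciprocal is $O\bigl(\log\log u/(u\log^{\beta+1}u)\bigr)$, which is integrable at infinity whenever $\beta>0$.

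Once this approximation is in hand, the leading contribution to $\tfrac12\mathcal{I}(r,R)$ is $\tfrac12\int\frac{p\,du}{u\log^\beta(u)}$ between $u=2\log(R^{-1})$ and $u=2\log(r^{-1})$. For $\beta=1$ the primitive is $p\log\log u$, and the simplification $\log\log(2\log(\rho^{-1}))=\log\log\log(\rho^{-1})+o(1)$ as $\rho\to 0$ yields \eqref{eq:101a}. For $\beta\in(0,1)$ the primitive is $\frac{p}{1-\beta}(\log u)^{1-\beta}$, and the analogous simplification $\log(2\log(\rho^{-1}))=\log\log(\rho^{-1})+o(1)$ together with the prefactor $\tfrac12$ delivers \eqref{eq:102a}.
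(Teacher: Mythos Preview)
Your proposal is correct and follows exactly the route the paper intends: the paper offers no explicit proof of this corollary, stating only that one ``uses Theorem~\ref{le:3} to write down explicitly the modulus'' in these cases, and your computation supplies precisely those details. Your handling of part~(ii) via the asymptotic inversion $g^{-1}(u)=(u/p)(\log u)^\beta\bigl(1+O(\log\log u/\log u)\bigr)$ and the resulting integrable error $E(u)$ is the natural way to carry this out, and the absorption of the $E$-tail into the $o(1)$ term is justified since the lower integration limit $2\log(R^{-1})\to\infty$ as $R\to 0$.
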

Note that in these examples $\psi(1)>0$, but this has no effect on the results as one may easily check in these cases.
\subsection{Proof of Theorem \ref{LowerModCont}} 

We first consider boundary points, one of which we may assume to be $1$. We employ the Beurling type estimate (see e.g. \cite[Lemma 3.6]{GKT})
	\begin{equation}\label{eq:beurling}
	\diam(f^{-1}(E))\leq c\, \diam(E)^{1/2K} 
	\end{equation}
	for continua $E\subset f(\partial \D)$. For $z\in\partial\D$ we denote $|z-1|=d$ and assume that there are points $z\in \partial \D$ arbitrarily close to 1 with
	\begin{equation}\label{eq:beurling2}
	d':=|f(z)-f(1)|\leq d^\alpha=|1-z|^\alpha.
	\end{equation}
	where $\alpha >2K$. We then  seek for an upper bound for $\alpha.$ For that end, fix $\ell\geq 2$. Pick $z$ satisfying \eqref{eq:beurling2} 
	with $d<<1$ and note that we may pick disjoint continua $F_1,F_2\subset
	f(\partial \D\setminus L)$, where $L$ is the open shorter arc between 1 and $z$, and  with the following properties:
	\begin{eqnarray}\label{eq:beurling3}
	f(1)\in F_1,\quad f(z)\in F_2\qquad \textrm{and}\quad \diam(F_1),\; \diam(F_2)= \ell d' \leq \ell d^\alpha.
	\end{eqnarray}
	By  \eqref{eq:beurling} it follows that the preimages $I_j:=f^{-1}(F_j)$ are closed intervals of $\partial \D$ such that
	$|I_j|\leq c (\ell d^\alpha)^{1/2K}$, $j=1,2$.  And hence, as one of them contains the point $1$ and the other point $z$, we have $\dist (I_1,I_2)\gtrsim d$. Let $\Gamma$ be the family of curves that join $I_1$ to $I_2$. Then $f(\Gamma)$ is the family of curves joining $F_1$ to $F_2$. We obtain
	\begin{equation}\label{eq:beurling4}
	M_K(\Gamma)\geq M(f(\Gamma))\geq c_2\log(\ell+1)
	\end{equation}
	where $c_2>0$ is independent of $\ell$, and the last estimate is due to standard classical modulus estimate \cite[Lemma 7.38]{V}  since now $\ell \dist (F_1,F_2)\leq \diam(F_1)= \diam (F_2)$. To estimate the weighted modulus from above we denote by $\bar{\Gamma}$ the family of curves that join inner and outer boundary inside the annulus $A(1; c(\ell d^\alpha)^{1/2K}, d/2)$ and note that $M_K(\bar{\Gamma}) \geq M_K(\Gamma)$. 
	
	We get an upper estimate for $M_K(\bar{\Gamma})$ by  $M_{\psi,I_0}(\bar{\Gamma})$ with the choice  $\psi(x)=e^{px}-1$, and with  some value of $I_0$ that depends only on the map $f$. Thus by the  first part of Corollary \ref{cor:2} and \eqref{eq:beurling4} it follows for small enough $d$ that
	$$
	2\pi\Big(o_{\psi,I_0}(1)+\frac{p}{2}\Big(\log\log \big(c^{-1}(\ell d^\alpha)^{-1/2K}\big)-\log\log\big((d/2)^{-1}\big)\big)\Big)\Big)^{-1}\geq c_2\log(\ell+1).
	$$
Letting $d\to 0$ we obtain 
$$
	\alpha\;\leq \; 2K\exp\left(\frac{c}{p\log(\ell+1)}\right)
	$$
	which gives the desired estimate since $\ell$ may be taken arbitrarily large. Finally, we note that the case where one or both of the points lie in the interior can be handled exactly in the same manner.
	\begin{remark}\label{rem:other_psi}
	One could of course optimize over the choice of $\ell$ in the above proof, and obtain modulus of continuity for the inverse map on the boundary that decays slower than any power $t^{\alpha}$ with $\alpha >2K$.
	
		Moreover, given a more relaxed integrability condition for the distortion $K_f$ we can obtain a lower bound for the modulus of continuity by combining Theorem \ref{le:3} 
		with the proof of 
		Theorem \ref{LowerModCont}. This works 
		as long as the weighted modulus $M_K(\bar{\Gamma})$
		 converges to zero, and  the range in which this happens can be seen from Corollary \ref{cor:1}, and thus  it includes  the so called sub-exponentially integrable mappings.  As  an example we state the following result (leaving the details of the proof to the reader):
\begin{corollary}\label{cor:subexp} Let $f$ be a planar mapping  of finite distortion such that $\displaystyle \exp\Big(\frac{pK_f}{1+\log K_f}\Big)\in L^1_{loc} $ for some $p$. Assume also that $f$ is $K$-quasiconformal in the unit disk. Then for any $c>1$ we have
	\begin{equation}\label{LowerModContyhtA}
	 |f(z)-f(w)| \gtrsim \exp\Big( -\big(\log(1/|z-w|)\big)^c\Big)\qquad \textrm{\rm for any } z,w\in \partial \D.
	\end{equation} 
\end{corollary}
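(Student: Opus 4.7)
The plan is to mirror the argument for Theorem \ref{LowerModCont}, replacing the exponential gauge by the sub-exponential gauge $\psi(x) = \exp(px/(1+\log x))$, and using the finer modulus estimate of Theorem \ref{le:3} (essentially the $\beta=1$ case of Corollary \ref{cor:2}(ii)) in place of part (i) of Corollary \ref{cor:2}. First, reducing to $w = 1$ and writing $d = |z-1|$, $d' = |f(z)-f(1)|$, for each parameter $\ell \geq 2$ I would pick disjoint continua $F_1, F_2 \subset f(\partial \D \setminus L)$ of diameter $\ell d'$ containing $f(1)$ and $f(z)$ respectively. The Beurling-type estimate \eqref{eq:beurling} guarantees that the preimage arcs $I_j = f^{-1}(F_j)$ have diameter $\lesssim (\ell d')^{1/2K}$ while $\dist(I_1, I_2) \gtrsim d$, so letting $\bar{\Gamma}$ be the family joining the boundaries of the annulus $A(1; c(\ell d')^{1/2K}, d/2)$, the same chain as before yields
$$c_2 \log(\ell + 1) \leq M(f(\Gamma)) \leq M_{K_f}(\bar{\Gamma}) \leq M_{\psi, I_0}(\bar{\Gamma}),$$
with $I_0$ a constant governed by the local integrability of $\exp(pK_f/(1+\log K_f))$.

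Next, I would verify that $\psi$ fits the hypotheses of Theorem \ref{le:3}: here $g(x) = \log\psi(x) = px/(1+\log x)$ satisfies $\log g(x) \sim \log x$, hence $\int \log\log\psi(x)/x^2\,dx$ converges. The asymptotics $g'(x) \sim p/\log x$ and $g^{-1}(y) \sim y\log y / p$ let one evaluate the integral in Theorem \ref{le:3} by essentially the same computation as in Corollary \ref{cor:2}(ii) with $\beta = 1$, yielding, for $r = c(\ell d')^{1/2K}$ and $R = d/2$,
$$M_{\psi, I_0}(\bar{\Gamma}) \leq \frac{C_p}{\log\log\log(1/r) - \log\log\log(1/R) + o_{\psi,I_0}(1)},$$
with $o_{\psi,I_0}(1) \to 0$ as $d \to 0$.

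Setting $M = \log(1/d')$ and $N = \log(1/d)$, the elementary identities $\log\log\log(1/r) = \log\log M + o(1)$ and $\log\log\log(1/R) = \log\log N + o(1)$ (valid for $M,N$ large) convert the resulting inequality into
$$\log\log M - \log\log N \leq \frac{C_p'}{\log(\ell + 1)} + o(1).$$
Given any $c > 1$, I would fix $\ell$ so large that $C_p'/\log(\ell+1) < \log c$, and then, letting $d \to 0$, conclude $\log M \leq c\log N$, i.e.\ $d' \geq \exp(-(\log(1/d))^c)$. The case where $d$ is bounded below, or where one or both points lie in the interior, is absorbed into the multiplicative constant by uniform continuity of $f$ on $\overline{\D}$, exactly as in the proof of Theorem \ref{LowerModCont}.

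The main obstacle I anticipate is bookkeeping of error terms: one needs the simplification $\log\log\log(1/r) = \log\log M + o(1)$ to hold uniformly in the range of interest, even though $M$ may range over values comparable to a fixed power of $N$ while $\ell$ is held fixed. A secondary check is that the local $L^1$ bound on $\exp(pK_f/(1+\log K_f))$ translates into the uniform bound $\int_B \psi(K_f) \leq I_0$ on a ball containing all the relevant annuli, which is immediate since $\psi(x) \leq \exp(px/(1+\log x))$ for $x \geq 1$ and the annuli are contained in a compact neighbourhood of $\overline{\D}$.
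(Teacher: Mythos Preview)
Your proposal is correct and follows exactly the route the paper indicates: the paper explicitly leaves the proof of Corollary~\ref{cor:subexp} to the reader, stating only that it follows by combining Theorem~\ref{le:3} (specifically the $\beta=1$ case of Corollary~\ref{cor:2}(ii)) with the proof of Theorem~\ref{LowerModCont}, which is precisely what you do. Your identified bookkeeping concerns are handled correctly---the error terms are controlled once one notes (as in the paper's proof of Theorem~\ref{LowerModCont}) that the argument is only needed along sequences where $d'$ is already much smaller than the target bound, so that $M\to\infty$ and the annulus is non-degenerate; the minor remark about interior points is unnecessary since the corollary is stated only for $z,w\in\partial\D$.
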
	
\noindent We observe  that the continuity of the inverse map on the boundary has again improved drastically surpassing the general estimate for  mappings of exponentially integrable distortion, see \cite[Theorem B and Example B]{HK}, which is basically obtained by taking $c=2$ in \eqref{LowerModContyhtA}. In this  connection it is useful to note that in general the modulus of continuity for inverses in the class $ \exp(\frac{pK_f}{1+\log K_f})\in L^1_{loc}$ is still  weaker as the example
$$
f(z)=\frac{z}{|z|}\exp\left(-\frac{1}{p+\varepsilon}\big(\log(1/|z|)\big)^2\log\log(1/|z|)\right)
$$
demonstrates - for this and other results on modulus of continuity of the inverse in the subexponential class we refer to \cite{CH}.	
	\end{remark}
	
	\subsection{Proof of Corollary \ref{co:rotation}}
	We will finish this section  by  using Theorem \ref{LowerModCont} to find an upper bound for the pointwise rotation proving Corollary  \ref{co:rotation}. Thus, let $f$ be a planar mapping with $p$-exponentially integrable distortion that is $K$-quasiconformal in the upper half-plane and is normalized by $f(0)=0$ and $f(1)=1$. One of our main tools will again be the classical modulus bound 
	\begin{equation}\label{KvasimodulusIE}
	M(f(\Gamma)) \leq K M(\Gamma)
	\end{equation}
	that holds for any path family $\Gamma \subset \mathbb{H}$ and any $K$-quasiconformal mapping $f:\mathbb{H} \to \mathbb{C}$. 
	\medskip
	
	Fix $r>0$ and denote $E=[r,1]$ and $F=(-\infty,0]$. Set also $\Gamma$ to be the family of all paths connecting $E$ and $F$ in the upper half-plane. Our aim is to bound from above the winding of $f([r,1])$ around the origin using the modulus inequality \eqref{KvasimodulusIE}, which means that we must find good estimates for the moduli.  On the domain side this is simple as we can use classical estimates, see \cite[Theorem 7.26]{V}, to get
	\begin{equation}\label{Modulus bound from above proof rotation}
	M(\Gamma) \leq c \log\left( \frac{1}{r} \right)
	\end{equation}
	where $c$ is an universal constant and $r$ is small.
	
	Thus we can concentrate on estimating $M(f(\Gamma))$. First we write the modulus in polar coordinates by 
	\begin{equation*}
	M(f(\Gamma)) = \inf_{\rho\hspace{0.1cm}admissible}\int_{0}^{2\pi}\int_{0}^{\infty}\rho^{2}(t,\theta)t\hspace{0.1cm}dtd\theta
	\end{equation*}
	and establish a lower bound for 
	\begin{equation*}
	\int_{0}^{\infty}\rho^{2}(t,\theta)tdt
	\end{equation*}
	that is independent of the direction $\theta$ and admissible function $\rho$. Assume that the image $f([r,1])$ winds $n(r)+1$ times around the origin and denote by $L_\theta$ the half line starting from the origin to a given direction $\theta$. Since $f$ is a homeomorphism it is clear that we can find $n(r)$ separate line segments  $\left(x_{j}e^{i\theta},y_{j}e^{i\theta}\right)\subset L_\theta$ such that each $\left(x_{j}e^{i\theta},y_{j}e^{i\theta}\right)$ belongs to $f(\Gamma)$. Furthermore, these positive real numbers $x_j$,$y_j$  satisfy 
	$$\aligned
	0<r_f\leq x_1<y_1<...<x_{n(r)}<y_{n(r)}\leq c_f,
	\endaligned$$ where $c_f=\sup_{z\in E}|f(z)|$ and $r_f=\min_{z\in E}|f(z)|$ and we note that neither $c_f$ nor $r_f$ depend on $\theta$. Next we estimate
	$$\aligned
	\int_{0}^{\infty}\rho^{2}(t,\theta)tdt\geq\sum_{j=1}^{n(r)}\int_{x_j}^{y_j}\rho^{2}(t,\theta)tdt
	\endaligned$$
	and for every individual integral we can use H\"older, since $\rho$ is admissible and segments  $\left(x_{j}e^{i\theta},y_{j}e^{i\theta}\right)$ belongs to $f(\Gamma)$, to estimate
	$$\aligned
	\int_{x_j}^{y_j}\rho^{2}(t,\theta)tdt\geq\left(\int_{x_j}^{y_j}\rho(t,\theta)dt\right)^{2}\left(\int_{x_j}^{y_j}\frac{1}{t}dt\right)^{-1}\geq\frac{1}{\log\left(\frac{y_j}{x_j}\right)}.
	\endaligned$$
	Therefore,
	$$\aligned
	\int_{0}^{\infty}\rho^{2}(t,\theta)tdt\geq\sum_{j=1}^{n(r)}\frac{1}{\log\left(\frac{y_j}{x_j}\right)}
	\endaligned$$
	and by the choice of numbers $x_j$ and $y_j$ it is clear that
	$$\aligned
	\sum_{j=1}^{n(r)}\frac{1}{\log\left(\frac{y_j}{x_j}\right)}\geq\sum_{j=1}^{n(r)-1}\frac{1}{\log\left(\frac{x_{j+1}}{x_j}\right)}+\frac{1}{\log\left(\frac{c_f}{x_{n(r)}}\right)}.
	\endaligned$$
	Therefore we can use the arithmetic-harmonic-means inequality to estimate
	$$\aligned
	\sum_{j=1}^{n(r)}\frac{1}{\log\left(\frac{y_j}{x_j}\right)}\geq\frac{n^{2}(r)}{\log\left(\frac{c_f}{x_1}\right)}\geq\frac{n^{2}(r)}{\log\left(\frac{c_f}{r_f}\right)}.
	\endaligned$$
	The upper bound $c_f$ is independent of $r$ and insignificant as $r \to 0$. Thus we can combine our previous estimates and use Theorem \ref{LowerModCont}, when $r$ is small, to obtain
	\begin{equation*}
	\int_{0}^{\infty}\rho^{2}(t,\theta)tdt \geq \frac{n^{2}(r)}{cK\log \left( \frac{1}{r} \right)},
	\end{equation*} 
	where $c$ and $C$ are universal constants. Since this bound holds for an arbitrary direction $\theta$ and any admissible $\rho$ we get  
	\begin{equation}\label{Modulus bound from below Rotation proof}
	M(f(\Gamma)) \geq \frac{c n^{2}(r)}{K\log \left( \frac{1}{r} \right)},
	\end{equation}
	where $c$ and $C$ are again universal constants. Combining the bounds \eqref{Modulus bound from above proof rotation} and \eqref{Modulus bound from below Rotation proof} with the modulus inequality we get 
	\begin{equation*}
	\frac{c n^{2}(r)}{K\log \left( \frac{1}{r} \right)} \leq cK \log \left( \frac{1}{r} \right),
	\end{equation*}
	which leads to the desired bound for the winding number $n(r)$ and thus finishes the proof of Corollary \ref{co:rotation}.

Finally we note that  also in more general situations, after establishing a lower bound for the modulus of continuity on the boundary $\partial \D$, see Remark \ref{rem:other_psi}, we can use the above proof word by word to find an upper bound for the pointwise rotation.

\section{Regularity at cusps}\label{se:regularity}

In the present section we study regularity of  conformal map $\D\to\Omega$ in the situation where the  domain $\Omega$ is rather smooth apart from outward cusps. Besides intrinsic interest, this relates in a nice way to the general topic of the paper since one expects that extremal situations to regularity in the general setup of the present paper  arise from a single cusp. Namely, in analogous situations  for quasiconformal  extensions the extremal maps correspond to domains with outward pointing angle, see  \cite{PS,Pr19}.

We start by studying  the regularity of a conformal map in a neighbourhood of a boundary point $w_0\in\partial \D$ of the unit disk where the derivative  essentially increases upon approaching $w_0$. The following lemma --  basically a variant of Hardy-Littlewood-P\'olya's  re-arrangement  inequality -- estimates  local regularity of $f'$ in such a situation as soon as a bound for  the modulus of the continuity of the map in a neighbourhood of $w_0$ is known.  
\begin{lemma}\label{estiLem}
Assume that $f:\D \to \Omega$ is a conformal map, where  $\Omega \subset \D$ is a bounded Jordan domain, and let us denote also by $f$ the homeomorphic extension $\overline{\D}\to\overline{\Omega}$. Assume that $f$ satisfies the following  modulus of continuity estimate on the boundary arc $A:=
\{ e^{it}\; : \; t\in (-\varepsilon_0,\varepsilon_0)\} \subset\partial\D$: 
\begin{equation}\label{eq:jep1}
|f(z_1)-f(z_2)| \leq \psi (|z_1-z_2|), \qquad \quad z_1, z_2\in  A,
\end{equation}
where  $\psi:[0,\infty)\to [0,\infty)$ is strictly increasing,  satisfies $\psi(0)=0$, and   is differentiable on $(0,\infty)$.
Moreover, assume  that $f'$ extends continuously to the punctured arc $A\setminus\{1\}$ and there is a finite constant $C$ such that 
\begin{equation}\label{eq:jep2}
|f'(e^{it})|\leq C |f'(e^{it'})|\;\;\textrm{and}\;\; |f'(e^{-it})|\leq C |f'(e^{-it'})|
\end{equation}
together with
\begin{equation}\label{eq:jep3A}
|\arg f'(e^{it})-\arg f'(e^{it'})|,\;  |\arg f'(e^{-it})-\arg f'(e^{-it'})| \;\leq \pi/2
\end{equation}
for all $0<t'<t<\varepsilon_0$.
Then for every convex and increasing function $\Phi:(0,\infty)\to (0,\infty)$  with $\Phi (\lambda x)\leq C_\lambda\Phi(x)$ for all $\lambda>1$ and $x>0$ we have
$$
\int_{\D\cap B(1,\varepsilon/2)}\Phi \big(|f'(z)|\big)dA(z)\lesssim \int_{\D\cap B(1,\varepsilon/2)}\Phi \big(\psi'(|z|)\big)dA(z).
$$
\end{lemma}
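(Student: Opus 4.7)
The plan is to first produce a pointwise upper bound for $|f'|$ on the boundary arc $A$ out of the modulus of continuity of $f$, then transfer it to the interior of $\D\cap B(1,\varepsilon/2)$ by Koebe distortion, and finally close the estimate with a Hardy--Littlewood--P\'olya rearrangement argument. For the boundary bound I parametrise the upper half of $A$ by $t\mapsto e^{it}$, $t\in(0,\varepsilon_0)$. The small--oscillation hypothesis \eqref{eq:jep3A} forces the integrand of
$$
f(e^{it_2})-f(e^{it_1})\;=\;\int_{t_1}^{t_2}f'(e^{is})\,ie^{is}\,ds
$$
to lie in a common half--plane for $0<t_1<t_2<\varepsilon_0$, so that
$$
\int_{t_1}^{t_2}|f'(e^{is})|\,ds\;\leq\;\sqrt{2}\,|f(e^{it_2})-f(e^{it_1})|\;\leq\;\sqrt{2}\,\psi(|e^{it_2}-e^{it_1}|)\;\lesssim\;\psi(t_2-t_1)
$$
by \eqref{eq:jep1}. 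Choosing $t_1=t/2$ and $t_2=t$ and using the essential monotonicity \eqref{eq:jep2}, which yields $|f'(e^{is})|\geq C^{-1}|f'(e^{it})|$ for $s\in[t/2,t]$, produces the pointwise bound $|f'(e^{it})|\lesssim \psi(t)/t$ on the upper half of $A$; the same reasoning works on the lower half.

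Next I would transport this estimate into the interior. For $z\in\D\cap B(1,\varepsilon/2)$ set $\zeta(z):=z/|z|\in\partial\D$. Since $f'$ extends continuously to $A\setminus\{1\}$, Koebe's distortion theorem gives $|f'(z)|\asymp |f'(\zeta(z))|$ whenever the approach is non--tangential, i.e.\ $|1-\zeta(z)|\gtrsim 1-|z|$. For points $z$ in the remaining narrow non--tangential region at the cusp tip $1$ the monotonicity \eqref{eq:jep2} allows one to control $|f'(z)|$ by $|f'|$ at the nearby boundary points $e^{\pm i(1-|z|)}$ on the two sides of $1$. Combining both cases with Step~1 yields the uniform interior bound
$$
|f'(z)|\;\lesssim\;\frac{\psi(|1-z|)}{|1-z|}\qquad\text{on }\D\cap B(1,\varepsilon/2).
$$

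For the final step, rewrite the Step~1 estimate as $\int_0^t|f'(e^{is})|\,ds\lesssim \psi(t)=\int_0^t\psi'(s)\,ds$. Together with the essential monotonicity of $|f'|$ along $A$, this identifies $|f'(e^{i\cdot})|$ (up to the constant $C$) with a non--increasing function whose partial integrals are dominated by those of $\psi'$, so the Hardy--Littlewood--P\'olya rearrangement inequality upgrades the bound to
$$
\int_0^{\varepsilon_0}\Phi(|f'(e^{is})|)\,ds\;\lesssim\;\int_0^{\varepsilon_0}\Phi(\psi'(s))\,ds
$$
for every convex increasing $\Phi$ satisfying $\Phi(\lambda x)\leq C_\lambda\Phi(x)$. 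Applying this radius by radius in polar coordinates centred at $1$, and using Step~2 to handle the radial variable, then delivers the claimed two--dimensional inequality.

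The main obstacle is Step~2: standard Koebe distortion only controls $|f'|$ between hyperbolically close points, so the narrow non--tangential sector pointing toward the cusp tip $1$ must be treated separately by splicing together the boundary estimates on the two sides of $1$ through the small--oscillation hypothesis \eqref{eq:jep3A} for $\arg f'$. The rearrangement step is standard once the pointwise bound is in place, provided the constant $C$ from \eqref{eq:jep2} and the doubling constants $C_\lambda$ of $\Phi$ are absorbed into the implied multiplicative constants.
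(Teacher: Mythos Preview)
Your outline shares the paper's key ingredients: both use \eqref{eq:jep3A} to turn the modulus of continuity into the integral bound $\int_I|f'|\lesssim\psi(|I|)$ on boundary arcs, both use \eqref{eq:jep2} to pass to a monotone envelope of $|f'|$ on the boundary, and both finish with Hardy--Littlewood--P\'olya against $\psi'$. The gap is in Step~2.

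Koebe distortion compares $|f'|$ only at two \emph{interior} points of bounded hyperbolic distance; since the hyperbolic distance from any $z\in\D$ to its radial projection $\zeta(z)\in\partial\D$ is infinite, Koebe by itself never yields $|f'(z)|\asymp|f'(\zeta(z))|$. Continuous extension of $f'$ to $A\setminus\{1\}$ gives $|f'(z)|\to|f'(\zeta)|$ as $z\to\zeta$ for each fixed $\zeta$, but with no uniformity as $\zeta\to1$ --- and uniformity is exactly what you need. So the difficulty is not confined to the narrow sector at the tip that you flag; it is present at every boundary point. What does work is the growth/modulus estimate the paper uses: for $z$ in a Whitney square sitting over a boundary interval $I$ of length $\asymp\dist(z,\partial\D)$ one has $|f'(z)|\lesssim\diam f(I)/|I|$, with a universal constant and no pointwise boundary information about $f'$ required. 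Combined with $\diam f(I)\le\int_I|f'|$ and \eqref{eq:jep2}, this does recover $|f'(z)|\lesssim|f'(\text{boundary neighbour})|$ a posteriori, so your scheme is repairable --- but the repair is precisely the paper's estimate. The paper then bypasses your Step~3 as well: it applies Jensen on each Whitney cube to pass $\Phi$ inside the average of the monotone envelope $u$, sums the resulting $2^{-n}$--weighted horizontal layers to the single boundary integral $\int_0^1\Phi(u)$, and only then invokes Hardy--Littlewood--P\'olya.
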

\begin{proof} We may replace $\D$ by the upper half plane $\mathbb{H}$  for notational simplicity, and scale the situation so that the arc $A$ corresponds to the interval $(-2,2)\subset\R=\partial \mathbb{H}.$ By symmetry, it suffices to consider the integral over the right half of a neighbourhood $R:=(-2,2)\times (0,1)\subset\mathbb{H}$. We divide this in the standard way (up to measure zero) to Whitney cubes $Q_{n,k}$, where $n\geq 0$ and  $1\leq k\leq 2^n$, so that $Q_{n,k}$ has the left lower corner at $(k-1)2^{-n}+2^{-n}i$. Denote by $I_{n,k}$ the vertical projection of $Q_{n,k}$ to the real axis. By an easy  modulus estimate we have for any $z\in Q_{n,k}$
$$
|f'(z)| \lesssim\mathrm{diam}f(I_{n,k})|I_{n,k}|^{-1}.
$$
Namely, if $B$ stands for the hyperbolic ball of radius 1 in $\mathbb{H}$ centered at $z$, its image $f(B)$ is a hyperbolic ball in $f(\mathbb{H})$, and by basic hyperbolic geometry its diameter and distance from the boundary are comparable to $|f'(z)| |I_{n,k}|$. Letting $\Gamma$ stand for the curves in $\mathbb{H}$ connecting $B$ and $I_{n,k}$, we have  $M(\Gamma)\approx 1.$ By conformal invariance of modulus also $M(f(\Gamma))\approx 1$, which implies the claim by a standard comparison of the modulus to a modulus of an annulus, since $f(B)$ is a hyperbolic ball  of radius one in $f(\mathbb{H})$, and  by basic hyperbolic geometry both its diameter and its distance from the boundary $\partial (f(\mathbb{H}))$ are comparable to $|f'(z)| |I_{n,k}|$.

Let us define for $x\in (0,1]$ the  function $u(x)=(2C)^{-1}\sup_{t\in [x,1]}|f'(x)|.$ Then $u$ is increasing and it  satisfies $u(x)\approx |f'(x)|$ on $(0,1]$ by the first condition in \eqref{eq:jep2}, whence for each $n,k$ it holds that
$$
|f'(z)| \lesssim 2^{n}\int_{I_{n,k}}u\quad \textrm{for}\quad z\in Q_{n,k}.
$$
In turn, by condition \eqref{eq:jep2} we see that for all $0\leq x<y\leq 1$
\begin{equation}\label{eq:jep3}
\int_x^y u(t)dt\lesssim \psi(y-x).
\end{equation}
We may compute for each $n\geq 0$
\begin{eqnarray*}
\int_{\cup_{k=1}^{2^n}Q_{n,k}}\Phi\big(|f'(z)|\big) dA(z)&\lesssim&2^{-2n}\sum_{k=1}^{2^n}\Phi\Big(2^{n}\int_{I_{n,k}}u(x)dx\Big)
\;\leq\;2^{-n}\int_0^1\Phi\big(u(x)\big)dx,
\end{eqnarray*}
where we applied Jensen's inequality. By summing over $n$ we finally obtain
$$
\int_R\Phi\big(|f'(z)|\big) dA(z)\leq \int_0^1\Phi\big(u(x)\big)dx\leq \int_0^1\Phi\big(\psi'(x))\big)dx,
$$
where the last written inequality is a consequence of  \eqref{eq:jep3} and a continuous version of Hardy-Littlewood-P\'olya inequality, see e.g. \cite[Lemma 11]{HLP}.
\end{proof}

The above lemma is tailored towards   studying  regularity of a conformal map at a boundary point that corresponds to an  outward cusp. To make things precise, we  define the notion of  \emph{outward $C^{1+\varepsilon}$-cusp}: Let $\Omega\subset\C$ be a simply connected domain and let $z_0\in\partial\Omega$. We say that $\Omega$ has an outward $C^{1+\varepsilon}$-cusp at $z_0$ if there is an open square $Q$ centered at $z_0$ such that, modulo scaling and isometry, we may write
$$
Q\cap\Omega=\{z\in\C \; :\;  0<{\rm Re \,}z <1\quad\textrm{and}\quad \psi_1(x)<{\rm Im \,}z <\psi_2(x)\},
$$
where $\psi_1,\psi_2\in C^{1+\varepsilon}([0,1])$ satisfy $\psi_1<\psi_2$ on $(0,1]$ and $\psi'_1(0^+)=\psi'_2(0^+)$=0. If the last condition is replaced by
$\psi'_1(0^+)<\psi_2'(0^+)$, we speak of \emph{regular outward $C^{1+\varepsilon}$-corner.}

In the next result we exclude corners since the regularity of the conformal map  around a point corresponding to a outward $C^{1+\varepsilon}$-corner at the image side is easy to control, as it is up to a constant  the same as that of the map $(1-z)^{\alpha/\pi}$, where $\alpha$ is the inward angle. This is easily seen by `flattening the corner' by a suitable power map.

\begin{theorem}\label{lConv}
Assume that $f:\D\to \Omega$ is a conformal map, where $\Omega$ is a Jordan domain with $C^{1+\varepsilon}$-regular boundary apart from finitely many points that are tips of outward $C^{1+\varepsilon}$-cusps. Assume also that $f$   satisfies the modulus of continuity 
\[
|f(z_1)-f(z_2)| \leq \psi(|z_1-z_2|)\qquad \textrm{ for all } \quad z_1,z_2 \in \partial \D,
\]
where $\psi:[0,\infty)\to [0,\infty)$ satisfies $\psi(0)=0$,  strictly increasing,  and is differentiable on $(0,\infty)$. Then for every convex and increasing $\Phi:(0,\infty)\to (0,\infty)$  with $\Phi (\lambda x)\leq C_\lambda\Phi(x)$ for all $\lambda>1$ and $x>0$ we have
$$
\int_{\D}\Phi \big(|f'(z)|\big) dA(z)< \infty\quad \textrm{whenever}\quad  \int_{\D}\Phi \big(\psi'(|z|)\big) dA(z) <\infty.
$$
\end{theorem}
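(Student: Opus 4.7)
The plan is to reduce the theorem to a finite sum of local problems at the cusp preimages. Let $\zeta_1,\dots,\zeta_N\in\partial\D$ denote the preimages (under the continuous extension of $f$) of the cusp tips, and fix $\delta>0$ small enough that the balls $B(\zeta_k,\delta)$ are pairwise disjoint and so that $f$ maps $\partial\D\cap B(\zeta_k,\delta)$ into the union of the two $C^{1+\varepsilon}$-regular arcs that form the $k$th cusp. Decompose $\D$ as the disjoint union of the cusp neighbourhoods $V_k:=\D\cap B(\zeta_k,\delta/2)$ together with the complement $W:=\D\setminus\bigcup_k\overline{V_k}$. On $W$ the boundary $\partial W\cap\partial\D$ is mapped into finitely many $C^{1+\varepsilon}$-regular arcs of $\partial\Omega$; by classical Kellogg--Warschawski boundary regularity for conformal maps, $f'$ extends continuously and is bounded on $\overline{W}$, and therefore $\int_W\Phi(|f'(z)|)\,dA(z)<\infty$ trivially.

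For each cusp neighbourhood $V_k$ I would rotate so that $\zeta_k=1$ and apply Lemma \ref{estiLem} on $\D\cap B(1,\delta)$. The modulus of continuity hypothesis \eqref{eq:jep1} is immediate from the global assumption on $f|_{\partial\D}$. For the angle bound \eqref{eq:jep3A}: because the two boundary arcs forming the cusp are $C^{1+\varepsilon}$-regular with common horizontal tangent at the tip (as $\psi_1'(0^+)=\psi_2'(0^+)=0$), local Kellogg applied separately to each smooth arc shows that $\arg f'$ extends continuously to each one-sided preimage arc with a common limit at the cusp; after shrinking $\delta$ if necessary, the total variation of $\arg f'$ on each one-sided arc is below $\pi/2$. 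For the comparability condition \eqref{eq:jep2}: using the standard conformal distortion identity $|f'(z)|\asymp \dist(f(z),\partial\Omega)/\dist(z,\partial\D)$ together with the fact that the cusp half-width $\psi_2-\psi_1$ is (on a small enough neighborhood) monotone along the boundary, one deduces that $|f'(e^{it})|$ is comparable to a monotone function of $|t|$ on each one-sided preimage arc, which yields \eqref{eq:jep2} with a uniform constant.

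With the hypotheses of Lemma \ref{estiLem} verified, the lemma gives
\[
\int_{V_k}\Phi\big(|f'(z)|\big)\,dA(z)\;\lesssim\;\int_{V_k}\Phi\big(\psi'(|z-\zeta_k|)\big)\,dA(z).
\]
Translating each $V_k$ so that $\zeta_k$ is at the origin and invoking the doubling property $\Phi(\lambda x)\leq C_\lambda\Phi(x)$ to absorb translation-induced constants, each right-hand integral is controlled by the hypothesised finite quantity $\int_\D\Phi(\psi'(|z|))\,dA(z)$. Summing the $N$ cusp contributions together with the bounded contribution from $W$ produces $\int_\D\Phi(|f'(z)|)\,dA(z)<\infty$, as required.

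The principal obstacle will be the structural verification in Lemma \ref{estiLem}, particularly \eqref{eq:jep2}: the $C^{1+\varepsilon}$-regularity class leaves room for $|f'|$ to oscillate on small scales, and one must exploit the monotone funnel geometry of the cusp together with the scale-invariant distortion estimate to confine this oscillation within the uniform comparability demanded by the lemma. The condition \eqref{eq:jep3A} is considerably more routine once one uses that at an outward cusp the image boundary carries a well-defined common tangent at the tip.
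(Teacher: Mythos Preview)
Your overall architecture---localize, handle the smooth part by Kellogg--Warschawski, and at each cusp invoke Lemma~\ref{estiLem}---is exactly the paper's. Indeed, the paper's proof of Theorem~\ref{lConv} is a one-line reduction to Lemma~\ref{estiLem}, with all the work deferred to verifying condition~\eqref{eq:jep2} in the separate Lemma~\ref{le:lineEstim}.

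The gap is in your verification of~\eqref{eq:jep2}. The Koebe relation $|f'(z)|\asymp \dist(f(z),\partial\Omega)/\dist(z,\partial\D)$ is an interior estimate; on $\partial\D$ both distances vanish and the ratio carries no direct information about $|f'(e^{it})|$. You also assume that the cusp half-width $\psi_2-\psi_1$ is monotone near the tip, which is \emph{not} part of the $C^{1+\varepsilon}$-cusp definition and can fail. Even granting both points, turning ``width shrinks toward the tip'' into ``$|f'(e^{it})|$ is essentially monotone in $t$'' requires controlling how fast the boundary parametrization advances into the cusp, which is precisely the unknown quantity. The paper's route (Lemma~\ref{le:lineEstim}) is substantially different: embed $\Omega$ into a larger $C^{1+\varepsilon}$-smooth simply connected domain $\Omega'$ sharing one cusp arc, map $\Omega'$ conformally to a half-plane (with bounded distortion on the relevant arc by Kellogg--Warschawski), and thereby reduce~\eqref{eq:jep2} to a harmonic-measure comparison $\omega(I')/|I'|\lesssim \omega(I)/|I|$ for boundary intervals with $I'$ closer to the tip than $I$ (Lemma~\ref{le:harmonic}), established in turn via a boundary Harnack principle (Lemma~\ref{le:bharnack}). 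That harmonic-measure argument is the missing ingredient; your ``monotone funnel plus Koebe'' heuristic points in the right direction but does not by itself deliver~\eqref{eq:jep2}.
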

\begin{proof}
This follows from Lemma \ref{estiLem} as soon as we verify that the conformal map satisfies first condition in \eqref{eq:jep2} at a cusp point. This will be verified in Lemma \ref{le:lineEstim} below.
\end{proof}

We are ready for:
\begin{proof}[Proof of Theorem \ref{cor:exp_p_cusps}]
The result is an immediate consequence of Theorem \ref{lConv} and Zapadinskaya's result  \cite[Corollary 1]{Zap} which states that $f$ satisfies the 
modulus of continuity estimate
\[
|f(z_1)-f(z_2)| \leq C \frac{1}{\log^p \left( \frac{1}{|z_1-z_2|} \right)} \qquad \textrm{ for all } \quad z_1,z_2 \in \partial \D.
\]
\end{proof}

Finally, we establish the needed auxiliary result referred to above.

\begin{lemma}\label{le:lineEstim}
Let $f:\D\to\Omega$ be conformal map and assume  that $f(1)$ is the tip of an outward $C^{1+\varepsilon}$-cusp of $\Omega$. Then $f$ satisfies the first condition in \eqref{eq:jep2}.

\end{lemma}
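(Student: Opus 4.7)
The plan is to factor $f$, near the cusp point, through a standard half-strip and use the cusp geometry to compute $|f'(e^{it})|$ up to multiplicative constants, after which the essential monotonicity follows from the hypothesis $\psi_i'(0^+)=0$.

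Normalize so that $f(1)=0$ coincides with the cusp tip, and assume that $\Omega$ agrees in a neighbourhood $Q$ of $0$ with $\{0<x<1,\ \psi_1(x)<y<\psi_2(x)\}$; write $w(x):=\psi_2(x)-\psi_1(x)$ for the cusp width, so that $w(0)=0$ and $w'(0^+)=\psi_2'(0^+)-\psi_1'(0^+)=0$. Let $\phi:\Omega\cap Q\to H:=(0,\infty)\times(0,\pi)$ be a conformal map onto a model half-strip sending the cusp tip $0$ to $+\infty$. The Warschawski/Ahlfors distortion theorem for conformal maps of variable-width strips yields
\[
|\phi'(z)|\;\asymp\;\frac{1}{w(\mathrm{Re}\,z)}
\]
with constants controlled by $\|\psi_i\|_{C^{1+\varepsilon}}$.

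Set $F:=\phi\circ f$; this is a conformal map from a subdomain of $\D$ near $1$ onto $H$ with $F(1)=+\infty$. Comparison with the explicit model map $z\mapsto -\log(1-z)$ (which also sends $\D$ onto a half-strip with $1\mapsto\infty$ and whose derivative equals $1/(1-z)$) gives $|F'(e^{it})|\asymp 1/t$ for small $t>0$. Applying the chain rule and using that the upper cusp boundary is nearly horizontal at the tip, so that arc length is comparable to $x$-displacement, we obtain
\[
|f'(e^{it})|\;=\;\frac{|F'(e^{it})|}{|\phi'(f(e^{it}))|}\;\asymp\;\frac{w(x(t))}{t},
\]
where $x(t)>0$ is the $x$-coordinate of the boundary point $f(e^{it})$.

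It remains to show that $t\mapsto w(x(t))/t$ is essentially non-increasing for small $t$. The previous estimate also gives $x'(t)\asymp w(x(t))/t$, and a direct differentiation yields
\[
\frac{d}{dt}\Big(\frac{w(x(t))}{t}\Big)\;\asymp\;\frac{w(x(t))}{t^2}\big(w'(x(t))-1\big),
\]
which is strictly negative once $x(t)$ is small, since $w'(0^+)=0$. This furnishes $|f'(e^{it})|\lesssim|f'(e^{it'})|$ for $0<t'<t<\varepsilon_0$; the complementary range where both angles lie in $[t_1,\varepsilon_0]$ is handled by Kellogg--Warschawski regularity of $f'$ on that closed arc, and an identical argument on the lower boundary arc controls $|f'(e^{-it})|$. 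The main obstacle I anticipate is making the Warschawski/Ahlfors estimate for $|\phi'|$ quantitatively uniform over general $C^{1+\varepsilon}$-cusps, since the width $w$ need not be monotone and the standard distortion estimates carry error terms that must be bounded purely from $\psi_i\in C^{1+\varepsilon}$ with $\psi_i'(0^+)=0$; a careful dyadic decomposition of the cusp and comparison with power-law model cusps on each scale should suffice.
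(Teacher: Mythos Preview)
Your plan takes a genuinely different route from the paper. The paper never derives an asymptotic for $|f'(e^{it})|$; instead it straightens one side of the cusp by embedding $\Omega$ in a $C^{1+\varepsilon}$ super-domain $\Omega'$ and mapping $\Omega'$ conformally to $\mathbb{H}$ (so that this auxiliary map has derivative $\asymp 1$ by Kellogg--Warschawski), and then reduces the essential monotonicity of $|f'|$ to a harmonic-measure comparison $\omega(I')/|I'|\lesssim\omega(I)/|I|$ for intervals $I'$ closer to the tip than $I$, proved via the boundary Harnack principle (Lemmas~\ref{le:harmonic} and~\ref{le:bharnack}). Your approach instead factors through a half-strip and aims for the explicit formula $|f'(e^{it})|\asymp w(x(t))/t$, which, if available, makes the monotonicity transparent by the differentiation you indicate. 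What the paper's method buys is robustness: it uses only potential theory that is insensitive to how fast the cusp narrows. What your method would buy is a quantitative description of $|f'|$, not just monotonicity.

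The gap in your plan is exactly the one you flag, and it is more serious than a technicality. The classical Warschawski derivative estimates for strip maps require hypotheses such as $\int (\psi_i')^2/w\,dx<\infty$ (or bounded $w'/w$-type curvature), and for a general $C^{1+\varepsilon}$ cusp one only knows $w(x)=O(x^{1+\varepsilon})$ with no lower bound whatsoever on $w$; such integrability conditions can therefore fail. Your proposed dyadic-scale comparison with power-law cusps is plausible, but carrying it out uniformly (including up to the boundary, which is where you need $|\phi'|$) would be a substantial argument in its own right---roughly as much work as the paper's harmonic-measure proof. By contrast, the boundary Harnack route needs nothing about $w$ beyond $w>0$ on $(0,1]$ and $w(0)=0$, which is why the paper chose it. A minor side remark: $z\mapsto -\log(1-z)$ does not map $\D$ onto a half-strip, but your conclusion $|F'(e^{it})|\asymp 1/t$ is still correct, since after a M\"obius change $F$ has a simple logarithmic singularity at $1$ (Schwarz reflection across the circular arc, composed with $\zeta\mapsto e^{-\zeta}$, shows $e^{-F}$ is conformal at $1$).
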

\begin{proof} 
We need to show that $|f'(e^{it})| \leq C |f'(e^{it'})|$ for every $0 < t' < t < \varepsilon_0$. Let $\varepsilon_0$ be small enough so that the curve $\{f(e^{is}) : s \in [0,2\varepsilon_0]\}$ is $C^{1+\varepsilon}$. Then we can find a simply connected $C^{1+\varepsilon}$-domain $\Omega'$ such that $\Omega \subset \Omega'$ and $T := \{f(e^{is}) : s \in [0,\varepsilon_0]\} \subset \partial \Omega'$.

The Riemann mapping theorem implies that there exists a conformal mapping $g: \Omega' \to \mathbb{H}$ that has $g(f(1))=0$ and $g(f(e^{is})) \neq \infty$ for $s \in [0,\varepsilon_0]$. The derivative of this mapping is $C^{1+\varepsilon}$ on the boundary outside of the pre-image of infinity \cite[Theorem 3.6]{Pom}, so in particular the restriction $\tilde{g} : \Omega \to g(\Omega)$ extends to a $C^{1+\varepsilon}$-function on the closure and has $\frac{1}{C_0} \leq \tilde{g}'(w) \leq C_0$ for all $w \in \overline{\Omega}$.

Denote $h = \tilde{g} \circ f$. Since
 $|f'(e^{it})|=|(\tilde{g}^{-1})'(h(e^{it})) h'(e^{it})| \leq C_0 |h'(e^{it})|$ and similarly $|f'(e^{it'})|=|(\tilde{g}^{-1})'(h(e^{it'})) h'(e^{it'})| \geq \frac{1}{C_0} |h'(e^{it'})|$, it suffices to show that $h$ satisfies the first condition in \eqref{eq:jep2}. Since $g$ is $C^{1+\varepsilon}$, then also $g(\Omega)$ has an outward $C^{1+\varepsilon}$-cusp at $g(f(1))$.

Moreover, by considering a M\"obius mapping $\tau$ taking the unit disc $\D$ to upper half-plane $\mathbb{H}$ with $\tau(1)=0$, we observe that this mapping has again bounded derivatives in the pre-image of $T$, so it suffices to show that for $\varphi = \tau^{-1} \circ h : \mathbb{H} \to g(\Omega)$ that $\varphi'(x_2) \leq C \varphi'(x_1)$  for $0 < x_1 < x_2 < \delta$ for some $\delta > 0$.

Assume first that $\varphi(x_1)<100\varphi(x_2)$. Using Schwarz reflection principle, we can extend $\varphi$ analytically to obtain a conformal mapping from $A_1 := \C \setminus \{z \in \C : \Im z = 0, \Re z \notin (0,1)\}$.

Within a hyperbolic disk of radius $r$ centered at $z_0$ in any domain $A$, there is a constant $C = C_r$ such that for any $z$ in the hyperbolic disk and any conformal $f$ we have
\begin{equation}\label{eq:reg1}
\frac{1}{C} |f'(z_0)| \leq |f'(z)| \leq C |f'(z_0)|.
\end{equation}
The claim follows then by the fact that the segment between $\varphi(x_1)$ and $\varphi(x_2)$ can be covered by finitely many hyperbolic disks of constant radius, where the number of hyperbolic disks does not depend on the choice of $x_1,x_2$.

We may thus assume that $\varphi(x_1)\geq 100\varphi(x_2)$.
Consider intervals $I$ and $I_0$ that contain $\varphi(x_1)$ and $\varphi(x_2)$ respectively. By choosing the segments $I$ and $I_0$ to have the same hyperbolic length in the reflected domain $A_1$, we observe in view of \eqref{eq:reg1} that it suffices to show that the harmonic measure $\omega_{\Omega,z_0}$ satisfies
\[
\frac{\omega_{\Omega,z_0}(I_0)}{|I_0|} \leq c \frac{\omega_{\Omega,z_0}(I)}{|I|},
\]
where  $z_0$ is a fixed point away from the cusp and  $c$ is some constant that is independent of the chosen intervals $I,I_0$ near the tip of the cusp.
The above fact follows from Lemma \ref{le:harmonic} below.

\end{proof}

\begin{lemma}\label{le:harmonic}
Assume that $\Omega$ is a bounded simply connected  domain containing an outward  cusp of the form
$$
\{ 0\leq y\leq g(x), \quad 0< x <1\},
$$
where the function $g:[0,1]\to \R$ belongs to $C^{1+\varepsilon }([0,1])$, is positive on $(0,1]$ and satisfies $g(0)=0$. Fix $z_0\in \Omega$ with ${\rm Re\,}z_0>3/4$  and denote by $\omega_{\Omega, z_0}$ the harmonic measure on $\partial\Omega$ with respect to $z_0$. Then for small enough $a\in (0,1/2)$ the following holds true: Let $0<b<a$. Denote $I=[a-g(a)/2,a+g(a)/2]$ and let $I'=[b-g(b),b],$ where $b<a-g(a)/2.$ Then there is $a_0<1$ and  a finite constant $c$, independent of the intervals $I,I'$, so that for $a\leq a_0$  the harmonic measures of the intervals $I$ and $I'$ satisfy the comparison
$$
\omega_{\Omega, z_0}(I')\leq c\frac{|I'|}{|I|}\omega_{\Omega, z_0}(I).
$$
\end{lemma}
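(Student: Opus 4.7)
The plan is to compare $\omega(I')$ and $\omega(I)$ through sharp upper and lower bounds coming from Ahlfors--Warschawski distortion, and then to exploit the $C^{1+\varepsilon}$-regularity of $g$ (in particular $g'(0)=0$) to convert the resulting exponentially small ratio into the linear bound $|I'|/|I|$.

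Place the cusp tip at the origin and set $S(x):=\int_x^{a_0}\frac{ds}{g(s)}$ for a small fixed $a_0\in(0,1/2)$. Note $S(x)\to\infty$ as $x\to 0^+$ since $g(0)=0$. First I would establish the upper bound
$$
\omega_{\Omega,z_0}(I')\;\leq\;\omega_{\Omega,z_0}\!\bigl(\{z\in\partial\Omega\,:\,\Re z\le b\}\bigr)\;\leq\; C_1 e^{-\pi S(b)}
$$
by the classical Ahlfors distortion theorem (a Beurling-type extremal-length bound for the harmonic measure of the cusp tip region from a point outside). For the matching lower bound on $\omega_{\Omega,z_0}(I)$ I would use the point $z_a:=a+ig(a)/2\in\Omega$ sitting above the interval $I$: a local half-plane Poisson comparison inside a disk of radius $\sim g(a)$ around $z_a$ gives $\omega_{\Omega,z_a}(I)\ge c_0>0$, and this is transferred to $z_0$ through a Harnack chain along a curve inside $\Omega$. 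For the sharp constant $\pi$ it is cleanest to work with the Riemann map $\varphi:\mathbb{H}\to\Omega$ sending $0$ to the cusp tip: Warschawski/Carleman asymptotics pin down $|\varphi'|$ near $0$ and, combined with the Poisson kernel from $\varphi^{-1}(z_0)$, they yield
$$
\omega_{\Omega,z_0}(I)\;\geq\; C_2 e^{-\pi S(a)}.
$$
Dividing the two bounds produces
$$
\frac{\omega_{\Omega,z_0}(I')}{\omega_{\Omega,z_0}(I)}\;\leq\; C\exp\!\Bigl(-\pi\int_b^a\frac{ds}{g(s)}\Bigr).
$$

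Since $g\in C^{1+\varepsilon}$ and $g'(0)=0$, we have $g'(s)\to 0$ as $s\to 0^+$, so shrinking $a_0$ we may assume $g'(s)\le\pi/2$ for $s\in(0,a_0]$; equivalently $\pi/g(s)\ge 2 g'(s)/g(s)$. Integrating from $b$ to $a$ yields
$$
\pi\int_b^a\frac{ds}{g(s)}\;\geq\;2\log\frac{g(a)}{g(b)},
$$
hence $\exp(-\pi\int_b^a ds/g(s))\le (g(b)/g(a))^2\le g(b)/g(a)=|I'|/|I|$ (the last inequality is trivial when $g(b)\ge g(a)$, since the right-hand side is then $\ge 1$). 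Combined with the previous display this gives $\omega(I')\le c\,(|I'|/|I|)\,\omega(I)$, with $c$ independent of the chosen intervals.

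The main obstacle is securing the sharp constant $\pi$ in the exponent of the lower bound for $\omega(I)$: the Ahlfors upper bound is classical, but the matching lower bound is more delicate and rests on either a careful Harnack chain with quantitative bookkeeping of the hyperbolic distance along a geodesic entering the cusp, or Warschawski-type asymptotics for the Riemann map together with the explicit Poisson integral in $\mathbb{H}$. If only a weaker constant $c_0<\pi$ is obtained in the lower bound, the argument still goes through because the strict inequality $g'(s)<\pi/2$ available on $(0,a_0]$ leaves room: one gets a slightly worse power $(g(b)/g(a))^\eta$ with $\eta\in(0,1]$, which is still $\lesssim g(b)/g(a)$ up to an absolute constant.
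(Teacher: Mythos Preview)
Your approach via Ahlfors--Warschawski asymptotics is genuinely different from the paper's, and while the main line can in principle be completed, the fallback you sketch in the last paragraph does not work. If the lower bound for $\omega(I)$ comes from a Harnack chain it takes the form $\omega(I)\gtrsim e^{-C_3 S(a)}$ with some Harnack constant $C_3>\pi$ (a \emph{weaker} lower bound corresponds to a \emph{larger} exponent; your ``$c_0<\pi$'' has the direction reversed). Dividing into the Ahlfors upper bound then yields
\[
\frac{\omega(I')}{\omega(I)}\;\lesssim\;e^{-\pi S(b)+C_3 S(a)}\;=\;e^{-\pi\int_b^a ds/g(s)}\cdot e^{(C_3-\pi)S(a)},
\]
and the second factor blows up as $a\to 0$, so no constant independent of $I$ results. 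Separately, the claim that $(g(b)/g(a))^\eta\lesssim g(b)/g(a)$ for $\eta\in(0,1]$ is false: when $g(b)<g(a)$ the quotient $(g(b)/g(a))^{\eta-1}$ tends to $\infty$ as $b\to 0$. (What \emph{would} help is that $g'(0)=0$ lets you push the exponent $\eta$ arbitrarily \emph{large}---but note that the lemma as stated only assumes $g(0)=0$, not $g'(0)=0$.) So your route genuinely needs the sharp $\pi$ on both sides, i.e.\ a Warschawski-type theorem adapted to $C^{1+\varepsilon}$ cusps, which is substantial extra machinery and still leaves the case $g'(0)\ge\pi/2$ uncovered.

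The paper avoids all of this by moving the observation point rather than estimating from $z_0$. One cuts $\Omega$ by the vertical crosscut $L=(a,a+ig(a))$; the maximum principle on the component of $\Omega\setminus L$ containing $z_0$, together with a boundary Harnack step, reduces matters to $z$ on the lower half of $L$. For such $z$ one has $\omega_{\Omega,z}(I)\ge c_0$ by comparison with a fixed-shape triangle based on $I$, while domain monotonicity gives $\omega_{\Omega,z}(I')\le\omega_{\mathbb C\setminus(-\infty,a],\,z}(I')\lesssim |I'|/|I|$ via the explicit square-root map of the slit plane. No exponential asymptotics enter, no matching-of-constants problem arises, and no hypothesis on $g'(0)$ is needed.
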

\begin{proof}
Let $L$ be the vertical line $(a, a+ig(a))$. For $z\in\Omega$ denote $h(z):=\omega_{\Omega, z}(I)$ and $h_1(z):=\omega_{\Omega, z}(I')$. Now $L$ cuts $\Omega$ in two subdomains,  and denote by $\Omega'$ the part 'to the right' of $L$. By considering the harmonic function $c\frac{|I'|}{|I|}h-h_1$ in $\Omega'$ and comparing boundary values we see that it suffices to prove that 
$$
h_1(z)\leq c\frac{|I'|}{|I|}h(z)\quad\textrm{for}\quad z\in L.
$$
We claim that it is actually enough to show this for $z$ lying on the lower half $L_0:=(a, a+ig(a)/2)$ of the segment $L$:
\begin{equation}\label{eq:enough}
h_1(z)\leq c\frac{|I'|}{|I|}h(z)\quad\textrm{for}\quad z\in L_0.
\end{equation}
Namely, since $h,h_1$ are positive harmonic functions vanishing on the upper half on $L$, the desired inequality for the upper half of $L$ follows from the boundary Harnack principle (see Lemma \ref{le:bharnack}   below) as soon as we know it for the middle point $a+ig(a)/2$.

In order to show \eqref{eq:enough}, we note first that $h(z)\geq c_0$ for any $z\in (a, a+ig(a)/2)$, where $c_0$ is independent of the intervals. This is seen e.g. by considering a suitable isosceles triangle $T\subset\Omega$ whose  base is a subinterval of $I$ and the ratio of the base to the height is decided depending on the size of $g'(0)$. We note that $h(z)\geq \omega_{T,z}( I)\geq c_0$, where $c_0$ is independent of $I$ by scaling. In order to estimate $h_1$ we make an essentially maximal extension of $\Omega$, and replace it by the domain
$\Omega_0=\C\setminus (-\infty,a],$ and we need to show that $\omega_{\Omega_0,z}(I')\leq c|I'|/|I|$ for $z\in L_0$. After translation and scaling we may assume that $|I|=1$ and use the map $z\mapsto\sqrt{z}$ to replace the domain by the right half plane $\C_+:=\{ x>0\}$. It remans to check that
$$
\omega_{{\mathbb C}_+,z}(I_1)\leq C |I_1|
$$
for segments $I_1\subset [i,i\infty)$ (i.e. the images of intervals $I'$) with length less than 1, and for $z$ lying on the segment $(0,(1+i)/2)$. This is now easy by considering the angle with the vertex at $z$ and with base $I_1.$
\end{proof}

For the readers convenience we give a short proof  of  an instance of the well-known boundary Harnack principle, see  \cite{Ke}, where also the quantitative dependence is in the form that suits our needs.

\begin{lemma}\label{le:bharnack}  Let $\Omega$ be a bounded Jordan domain and  
let $A$ be an open boundary arc of $\partial\Omega$. Assume that the ball $B$ has center in $A$ and satisfies $rB\cap \partial\Omega\subset A$ with $r>1$. Then for any  positive harmonic functions $h_1,h_2$ on $\Omega$, both with vanishing boundary values on $A$, it holds that 
$$
\frac{h_1(z)}{h_2(z)}\leq  c\frac{h_1(w)}{h_2(w)} \quad\textrm{for all}\quad z,w \in B\cap \Omega,
$$
\end{lemma}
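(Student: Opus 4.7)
The plan is to reduce to positive harmonic functions on the unit disk via a Riemann map and then exploit the Herglotz representation.

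First I would apply Carath\'eodory's theorem to extend a Riemann map $\phi:\Omega\to\D$ to a homeomorphism of closures. Set $A':=\phi(A)$, $\tilde h_i:=h_i\circ\phi^{-1}$, and $K:=\overline{\phi(B\cap\Omega)}$. The assumption $rB\cap\partial\Omega\subset A$ with $r>1$ forces the compact set $\overline B\cap\partial\Omega$ to lie in the open arc $A$; consequently $K\cap\partial\D$ is a compact subset of $A'$, and $\delta:=\dist(K,\partial\D\setminus A')>0$. Since the ratio $h_1/h_2$ is unchanged under the conformal change of variables, it suffices to prove the comparison for $\tilde h_1,\tilde h_2$ at points of $\phi(B\cap\Omega)\subset K$.

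Next I would invoke the Herglotz representation and write $\tilde h_i(z)=\int_{\partial\D}P(z,\zeta)\,d\mu_i(\zeta)$ for positive Borel measures $\mu_i$ on $\partial\D$, where $P(z,\zeta)=(1-|z|^2)/|z-\zeta|^2$. The essential observation is that $\mu_i$ places no mass on $A'$: if $\mu_i(E)>0$ for some compact $E\subset A'$, then Fatou's theorem for the absolutely continuous part, together with the classical fact that the Poisson integral of a positive singular measure has nontangential limit $+\infty$ a.e.\ with respect to that measure, would produce points of $A'$ at which $\tilde h_i$ has a nonzero (indeed, possibly infinite) nontangential limit, contradicting its continuous vanishing on $A'$. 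Hence $\mu_i$ is supported in $F:=\partial\D\setminus A'$.

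The remaining step is a direct Poisson kernel comparison. For $z\in K$ and $\zeta\in F$ we have $\delta\leq|z-\zeta|\leq 2$, so
\begin{equation*}
\tfrac{1}{4}(1-|z|^2)\leq P(z,\zeta) \leq \tfrac{1}{\delta^2}(1-|z|^2).
\end{equation*}
Integrating against $\mu_i$ and using that $\mu_i$ is concentrated on $F$ yields
\begin{equation*}
\tfrac{1}{4}(1-|z|^2)\mu_i(F)\;\leq\; \tilde h_i(z)\;\leq\; \tfrac{1}{\delta^2}(1-|z|^2)\mu_i(F),\qquad z\in K.
\end{equation*}
For $z,w\in\phi(B\cap\Omega)$ the prefactor $1-|z|^2$ is strictly positive, and the $z$-dependence in the numerator and denominator of the ratio $\tilde h_1(z)/\tilde h_2(z)$ cancels, giving
\begin{equation*}
\frac{\tilde h_1(z)}{\tilde h_2(z)}\;\leq\; \frac{16}{\delta^{4}}\cdot\frac{\tilde h_1(w)}{\tilde h_2(w)}.
\end{equation*}
Pulling back through $\phi$ yields the claim with $c=16/\delta^{4}$.

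I expect the only mildly delicate step to be the support claim for $\mu_i$; once that is settled, the rest is the elementary Poisson kernel bound above, and no further machinery is needed.
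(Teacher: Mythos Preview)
Your approach is the same as the paper's---map conformally to $\D$, show the Herglotz measures are supported on $\partial\D\setminus A'$, and compare Poisson kernels---but you lose the quantitative control that is the whole point of the lemma. The constant $c$ is meant to depend only on $r$ (the paper introduces the lemma as a version ``where also the quantitative dependence is in the form that suits our needs''), and this uniformity is essential in the application inside Lemma~\ref{le:harmonic}, where the ball varies with the parameter $a$. Your constant $16/\delta^{4}$ is not of this form: $\delta=\dist(K,\partial\D\setminus A')$ can be arbitrarily small for fixed $r$. Concretely, take $\Omega=\D$, $A=\{e^{i\theta}:|\theta|<3\epsilon\}$, $B=B(1,\epsilon)$ and $r=2$; then $\phi$ is the identity and $\delta\approx 2\epsilon\to 0$ as $\epsilon\to 0$, so your bound blows up while the correct boundary Harnack constant stays bounded.

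What the paper does differently is to insert a modulus step before the Poisson estimate. The family $\Gamma$ of curves in $\Omega$ joining $B\cap\overline\Omega$ to $\partial\Omega\setminus A$ has modulus at most $2\pi(\log r)^{-1}$ by comparison with an annulus; conformal invariance of modulus then forces the \emph{relative} distance
\[
\frac{\dist(B',\partial\D\setminus A')}{\min\big(\diam B',\,\diam(\partial\D\setminus A')\big)}
\]
to be bounded below by some $c'(r)>0$. This yields two alternatives---either $\partial\D\setminus A'$ is small compared to the gap, or $B'$ is---and in each one can compare $|z-\alpha|$ for $z\in B'$, $\alpha\in\partial\D\setminus A'$ to a fixed reference (either $|z-\alpha_0|$ with $\alpha_0$ fixed, or $|z_0-\alpha|$ with $z_0$ fixed) with constants depending only on $r$. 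Your crude bound $\delta\le|z-\zeta|\le 2$ is exactly the step that needs this refinement; once you replace it by the two-case comparison, the rest of your argument goes through unchanged.
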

\begin{proof} Let $\Gamma$ be the path family consisting of curves in $\Omega$ that join $B\cap\overline{\Omega}$ to $\partial\Omega\setminus A$. By the assumption and simple comparison to the moduli of an annulus we see that $\Mod(\Gamma)\leq 2\pi(\log r)^{-1}:=c(r)$. We then push the whole  situation  to the unit disc by a conformal mapping $\phi:\Omega\to\D$, and denote the continuous extension to the boundary also by $\phi$. In addition, write  $\phi(B\cap\overline{\Omega})=:B'$ and $\phi(A)=:A'\subset\partial\D$ together with $\Gamma'=\phi(\Gamma)$. 
Since $\Mod(\Gamma')=\Mod (\Gamma)$,  applying the modulus estimate \cite[Lemma 7.38]{V}  for the connected subsets of the plane we see
that the relative distance of $B'$ and $\partial \D\setminus A'$ is bounded from below, i.e.
$$
\frac{\dist(B',\partial \D\setminus A')}{\diam(\partial \D\setminus A')\wedge\diam(B')}\geq c'(r)>0. 
$$
In the above calculation we note that by \cite[Lemma 5.20]{V} the modulus $M(\Gamma')$ changes only by a fixed constant when we momentarily consider path family that consists of all paths in the whole plane connecting the sets $B'$ and $A'\subset\partial\D$.
 
 Hence one of the following  two alternatives must be true:
\begin{equation}\label{eq:reg2}
\diam (\partial \D\setminus A')\leq c''(r)\dist(B',\partial \D\setminus A')\qquad \textrm\qquad \diam (B')\leq c''(r)\dist(B',\partial \D\setminus A').
\end{equation}

It turns out that the geometric information \eqref{eq:reg2} is exactly what we need for our purposes. By the Poisson kernel representation we have for any $z\in\D$ (and $j=1,2$)
\begin{equation}\label{eq:reg3}
h_j(\phi^{-1}(z))=\int_{\partial\D\setminus A'}\frac{1-|z|^2}{|z-\alpha|^2}\mu_j(d\alpha)
\end{equation}
where $\mu_j$ is a positive Borel measure supported on $\partial\D\setminus A'$. If the first alternative in \eqref{eq:reg2} holds, we may fix a point $\alpha_0\in\partial \D\setminus A'$ and deduce that $|z-\alpha|\approx_{r}|z-\alpha_0|$ for all $\alpha\in\partial\D\setminus A'$ and $z\in B',$ whence the Poisson representation verifies that
$$
h_j(z)\approx_{r}\mu_j(\partial\D\setminus A')(1-|\phi(z)|^2)|\phi(z)-\alpha_0|^{-2},\qquad z\in B\cap \Omega
$$
which obviously  proves the claim in this situation.  If the second alternative in \eqref{eq:reg2} holds, we fix $z_0\in B'$  obtaining $|z-\alpha|\approx_{r}|z_0-\alpha|$ for all $\alpha\in\partial\D\setminus A',$ and $z\in B'$  and
$$
h_j(z)\approx_{r}(1-|\phi(z)|^2)\int_{\partial\D\setminus A'}\frac{1}{|z_0-\alpha|^2}\mu_j(d\alpha),
$$
which yields the claim in this case.
\end{proof}

\end{document}